\documentclass[a4paper, 11pt, reqno]{amsart}
\usepackage[english]{babel}
\usepackage[utf8]{inputenc}
\usepackage[T1]{fontenc}
\usepackage{amsmath}
\usepackage{amssymb}
\usepackage{mathtools} 
\usepackage{color}
\usepackage{nicefrac}
\DeclareMathOperator{\sgn}{sgn}
\usepackage{a4wide}
\usepackage{amsrefs}
\usepackage{amsthm}
\usepackage{mathrsfs}
\usepackage{comment}
\usepackage{hyperref}

\theoremstyle{plain}
\newtheorem{thm}{Theorem}[section]
\newtheorem{prop}[thm]{Proposition}
\newtheorem{lemma}[thm]{Lemma}
\newtheorem{cor}[thm]{Corollary}

\theoremstyle{definition}
\newtheorem{defn}[thm]{Definition}

\theoremstyle{remark}
\newtheorem*{rmk}{Remark}
\newtheorem*{rmks}{Remarks}

\newcommand{\C}{\mathbb{C}}
\renewcommand{\H}{\mathbb{H}}
\newcommand{\Z}{\mathbb{Z}}
\newcommand{\Q}{\mathbb{Q}}
\newcommand{\N}{\mathbb{N}}
\newcommand{\R}{\mathbb{R}}

\newcommand{\im}{\textnormal{Im}}
\newcommand{\vt}[1]{\left\lvert #1 \right\rvert}

\makeatletter
\let\@@pmod\pmod
\DeclareRobustCommand{\pmod}{\@ifstar\@pmods\@@pmod}
\def\@pmods#1{\mkern4mu({\operator@font mod}\mkern 6mu#1)}
\makeatother

\binoppenalty=\maxdimen
\relpenalty=\maxdimen

\numberwithin{equation}{section}

\title{Modular Forms Related to Real Quadratic Fields as Traces of Cycle Integrals}
\author{Johann Stumpenhusen}
\address{Department of Mathematics and Computer Science, Division of Mathematics, University of Cologne, Weyertal 86-90, 50931 Cologne, Germany}
\email{jstumpen@math.uni-koeln.de}
\date{\today}

\begin{document}

\begin{abstract}
A decade ago, locally harmonic Maa{\ss} forms were first established by Bringmann, Kane, and Kohnen in negative weights and independently by Hövel in weight $0$. Since then, they have seen important applications related to twisted central $L$-values of newforms. However, there are just a few examples of these forms known so far, and in particular there is no unifying theory behind these examples. Towards this direction, we show that the representations of Zagier's $f_{k,D}$ and Mono's $g_{k+1,D}$ functions as traces of cycle integrals are not unique.
\end{abstract}

\subjclass[2020]{11F12 (Primary); 11E16, 11F30, 11F37 (Secondary)}

\keywords{Locally harmonic Maa{\ss} forms, Cycle integrals, Binary integral quadratic forms}

\maketitle

\section{Introduction and statement of results}

\subsection{Modular forms related to quadratic fields} We stipulate $\tau = u + iv$ and $z = x + iy$ with $u, x \in \R$ and $v, y \in \R_+$. Throughout the paper, if not stated otherwise, let $D \in \N$ be a positive non-square discriminant, $k \in \N$ even, $\mathcal{Q}_D$ the set of all binary integral quadratic forms of discriminant $D$ and $\H := \{\tau \in \C: v > 0\}$ the complex upper half-plane. In 1975, Zagier \cite{zagier75} introduced the functions
\[f_{k,D}: \H \to \C, \quad \tau \mapsto \sum_{Q \in \mathcal{Q}_D}\frac{1}{Q(\tau,1)^k}\]
which are cusp forms of weight $2k$ for $\Gamma := \operatorname{SL}_2(\Z)$ if $k > 2$ and, in fact, span $S_{2k}(\Gamma)$ \cite[Theorem 1]{katok}. Variations thereof have been investigated by Bengoechea \cite{ben13} (meromorphic cusp forms for $D < 0$) and Parson \cite{par93} (entire modular integrals for a partial sum and odd $k$).

In \cite{mo21}, Mono introduced the slightly modified functions
\[g_{k+1,D}: \H \to \C, \quad \tau \mapsto \sum_{Q \in \mathcal{Q}} \frac{\sgn(Q_\tau)}{Q(\tau,1)^{k+1}}\]
and showed that these are locally harmonic\footnote{In fact, they are even locally holomorphic.} Maa{\ss} forms (see Subsection \ref{subsec:LocallyHarmonicMaass} for a definition) with exceptional set $E_D  := \{\tau \in \H: Q_\tau = 0 \textup{ for some } Q \in \mathcal{Q}_D\}$ where $Q_\tau := \frac{a|\tau|^2 + bu + c}{v}$ for $Q(X,Y) = [a,b,c](X,Y) = aX^2 + bXY + cY^2$. This type of modular objects first appeared in work by Bringmann, Kane, and Kohnen \cite{bkk} as a result of constructing a locally defined modular object
\[\mathcal{F}_{1-k,D}: \H \to \C, \quad \tau \mapsto \frac{1}{2}\sum_{Q \in \mathcal{Q}_D}\sgn(Q_\tau)Q(\tau,1)^{k-1}\beta\left(\frac{Dv^2}{|Q(\tau,1)|^2};k - \frac{1}{2},\frac{1}{2}\right)\]
with exceptional set $E_D$ that is mapped to $f_{k,D}$ under both the Bol operator and the shadow operator (see Subsection \ref{subsec:DifferentialOperators} for a definition) which is impossible for a globally defined Maa{\ss} form with cuspidal shadow due to growth conditions. Here,
\[\beta(x;r,s) := \int_0^x t^{r-1}(1-t)^{s-1}\mathrm{d}t\]
is the incomplete $\beta$-function.

In a recent preprint \cite{brimo}, Bringmann and Mono constructed a new locally harmonic Maa{\ss} form
\[\mathcal{G}_{-k,D}: \H \to \C, \quad \tau \mapsto \frac{1}{2}\sum_{Q \in \mathcal{Q}_D}Q(\tau,1)^k\beta\left(\frac{Dv^2}{|Q(\tau,1)|^2};k + \frac{1}{2},\frac{1}{2}\right)\]
along the lines of Bringmann, Kane, and Kohnen's \cite{bkk} first example $\mathcal{F}_{1-k,D}$ as the preimage of $g_{k+1,D}$ under both the Bol operator and the shadow operator.

\subsection{Cycle integrals}

Löbrich and Schwagenscheidt \cite{lsmeromorphic} proved that $\mathcal{F}_{1-k,D}$ may be written as a trace of cycle integrals of Petersson's Poincaré series \cite{pet40}
\begin{align*}
    \mathbb{P}_{2k}(z,\tau) &\coloneqq v^{2k-1} \sum_{\gamma \in \Gamma}\frac{1}{(z - \tau)(z - \overline{\tau})^{2k-1}}\Big\vert_{2-2k,\tau}\gamma\\
    &= v^{2k-1} \sum_{\gamma \in \Gamma}\frac{1}{(z - \tau)(z - \overline{\tau})^{2k-1}}\Big\vert_{2k,z}\gamma
\end{align*}
which in turn was previously investigated by Bringmann and Kane \cite{brika20}. Concretely, Löbrich and Schwagenscheidt obtained the representation
\[\mathcal{F}_{1-k,D}(\tau) = \frac{(-1)^k}{4}\binom{2k - 2}{k - 1}\sum_{Q \in \mathcal{Q}_D \slash \Gamma}\int_{\Gamma_Q\backslash S_Q}\mathbb{P}_{2k}(z,\tau)Q(z,1)^{k-1}\textup{d}z\]
for $\tau \in \H \setminus E_D$ where $S_Q := \{\tau \in \H: Q_\tau = 0\}$ and $\Gamma_Q$ is the stabilizer of $Q$ under the action of $\Gamma$. In \cite[Theorem 1.1]{mo21}, Mono showed that an analogous result can be derived for $g_{k+1,D}$ by switching $z$ and $\tau$ and adjusting $k$ in the sense that
\begin{multline}\label{eq:ResultMono}
g_{k+1,D}(\tau) = \frac{2\Gamma(2k + 2)\prod_{j = 0}^{k-1}\left(k + j + 1)\right)}{D^{\frac{k+1}{2}}\Gamma\left(\frac{k+1}{2}\right)^2\prod_{l = 1 - k}^{-1}\left[(1 + l)(-l) + k(k + 1)\right]}\\ \times
\sum_{Q \in \mathcal{Q}_D \slash \Gamma} \int_{\Gamma_Q \backslash S_Q} \mathbb{P}_{2k+2}(\tau,z) Q(z,1)^{-k-1} \mathrm{d}z
\end{multline}
for $\tau \in \H \setminus E_D$. In \cite[subchapter 3.2]{lsmeromorphic}, Löbrich and Schwagenscheidt introduced a generalization of Petersson's Poincar\'e series $\mathbb{P}_{2k}$ which will be the star in our main result.

\begin{defn}\label{def:GeneralPeterssonPoincare}
    For $z \in \C$, $\tau \in \H$, $k_1 \in \N_{\geq 2}$ and $k_2 \in \Z$, we set
    \begin{align*}
        H_{k_1,k_2}(z,\tau) &:= \sum_{\gamma \in \Gamma} \frac{v^{k_1+k_2}}{(z - \tau)^{k_1-k_2}(z - \overline{\tau})^{k_1+k_2}}\Big|_{2k_1,z}\gamma\\
        &= \sum_{\gamma \in \Gamma} \frac{v^{k_1+k_2}}{(z - \tau)^{k_1-k_2}(z - \overline{\tau})^{k_1+k_2}}\Big|_{-2k_2,\tau}\gamma.
    \end{align*}
\end{defn}

$H_{k_1,k_2}(z,\tau)$ transforms like a modular form of weight $2k_1$ in $z$ and of weight $-2k_2$ in $\tau$ for $\Gamma$. The case where $k_2 = k_1 - 1$ appears in the results by Löbrich and Schwagenscheidt \cite{lsmeromorphic} and Mono \cite{mo21}\footnote{Depending on whether $k_1$ is odd or even.}.

Our main result shows that $f_{k,D}$ may be written as a trace of cycle integrals with $H_{k_1,k_2}(z,\tau)$ as integrands for certain $k_1, k_2 \in \Z$. Moreover, that this representation as well as the one given by Mono in \eqref{eq:ResultMono} is not unique.

\begin{thm}\label{thm:NewRepresentations}
    Let $k \in 2\N_{\geq 2}$. We have
    \begin{multline*}
        g_{k+1,D}(\tau) = \frac{2\Gamma(m + 1)\Gamma(2k + 2)\prod_{l = 0}^{k-1}\left[k + j + 1\right]}{D\Gamma\left(\frac{k+1}{2}\right)^2\prod_{l = 1 - k}^{-1}\left[(1 + l)(-l) + k(k + 1)\right]}\left(\prod_{l = 0}^{\frac{m}{2} - 1}\frac{(l + 1)}{(k - l)}\right)\\
        \times \sum_{Q \in \mathcal{Q}_D/\Gamma} \mathcal{C}_{2(m-k)}\left(H_{k+1,k-m}(\tau, \, \cdot \,),Q\right)
    \end{multline*}
    for $m \in 2\N$ with $m \leq 2k$ and
    \begin{multline*}
        f_{k,D}(\tau) = \frac{(-1)^{k+\frac{m}{2}}k}{D^{\frac{k}{2}}} \binom{2k}{k} \left(\prod_{l = 0}^{\frac{m}{2}-1}\frac{2(k + l) + 1}{2l + 1}\right) \sum_{Q \in \mathcal{Q}_D/\Gamma}\mathcal{C}_{2(k+m)}\left(H_{k,-(k+m)}(-\tau, \, \cdot \,), Q\right)
    \end{multline*}
    for each $m \in 2\N$. Here, $\mathcal{C}_k(F,Q)$ is the weight $k$ cycle integral of $F$ with respect to $Q$ as defined in Subsection \ref{subsec:BinaryAndCycleIntegrals}.
\end{thm}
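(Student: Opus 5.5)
The plan is to reduce both identities to the already known representations: Mono's formula \eqref{eq:ResultMono} for $g_{k+1,D}$, which is the case $m=0$ since $H_{k+1,k}=\mathbb{P}_{2k+2}$ up to normalisation, and the Löbrich--Schwagenscheidt type representation for $f_{k,D}$. The tool is a family of raising/lowering identities for the generalized Poincaré series $H_{k_1,k_2}$ in the cycle-integral variable. Concretely, I would first show that, as functions of $\tau$ (the variable of weight $-2k_2$), the Maa{\ss} lowering operator $L_\tau=-2iv^2\partial_{\overline{\tau}}$ maps $H_{k_1,k_2}$ to a constant multiple of $H_{k_1,k_2-1}$. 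It is enough to check this on the seed
\[
\frac{v^{k_1+k_2}}{(z-\tau)^{k_1-k_2}(z-\overline\tau)^{k_1+k_2}},
\]
and then use that $L$ commutes with the slash action, with the weight shifted by $2$. A direct computation gives
\[
L_\tau\!\left(v^{a}(z-\overline\tau)^{-a}\right)=-ia\,v^{a+1}(z-\tau)(z-\overline\tau)^{-a-1},
\]
with $a=k_1+k_2$. So $L_\tau H_{k_1,k_2}=c\,H_{k_1,k_2-1}$, where $c$ is a product of $(k_1+k_2)$ with a power of $i$ or $-1$. The raising operator gives the reverse relation.

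The second step passes to cycle integrals. In the Theorem the cycle integral is taken in the second slot: $\mathcal{C}_{w}(F,Q)=\int_{\Gamma_Q\backslash S_Q}F(z)Q(z,1)^{w/2-1}\,dz$, with $F=H(\tau,\cdot)$ of weight $-2k_2$. Here I would use the standard adjointness of weight-raising along geodesics. On $S_Q$ one has the identity $R_{w}(F)(z)Q(z,1)^{w/2}\,dz=\frac{d}{dz}\bigl(\ldots\bigr)+\lambda\,F(z)Q(z,1)^{w/2-1}\,dz$, which follows from $Q(z,1)\partial_z$ and the relation $Q(z,1)/y\cdot$ on the geodesic. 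Since $\Gamma_Q$-invariance kills the boundary term, iterating it twice gives
\[
\mathcal{C}_{w+4}\bigl(R^2F,Q\bigr)=\mu(w)\,\mathcal{C}_w(F,Q).
\]
Alternatively, one can expand $Q(z,1)^{j}$ and use the Bol-type identity for $R^{j}$. Either way, the output I am after is an explicit relation of the form
\[
\mathcal{C}_{2(m+2-k)}\bigl(H_{k+1,k-m-2}(\tau,\cdot),Q\bigr)=\kappa_m\,\mathcal{C}_{2(m-k)}\bigl(H_{k+1,k-m}(\tau,\cdot),Q\bigr),
\]
obtained by combining the step-one operator identity with this adjointness. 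The constant $\kappa_m$ should be a ratio like $(l+1)/(k-l)$. Inducting on $m/2$ from Mono's base case then gives the first formula, with the product $\prod_{l=0}^{m/2-1}(l+1)/(k-l)$. The restriction $m\le 2k$ is natural: at $m=2k$ the exponent $k_1-k_2$ still gives convergence, and beyond that a factor $(k-l)$ vanishes.

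The second formula is handled the same way. The base case $m=0$ is $H_{k,-k}(-\tau,z)=v^0/(z+\tau)^{2k}$ summed over $\Gamma$, a holomorphic weight $2k$ form in $\tau$. Its cycle integral against $Q(z,1)^{k-1}$ can be evaluated directly by unfolding, via the Katok/Zagier computation $\int_{S_Q}Q(z,1)^{k-1}(z-\tau)^{-2k}\,dz\propto D^{(1-k)/2}Q(\tau,1)^{-k}$. This yields $f_{k,D}$ with the constant $(-1)^k k\binom{2k}{k}D^{-k/2}$. Induction in $m$ then uses the same raising identity, now with $k_1=k$ and $k_2=-(k+m)$, producing the factors $(2(k+l)+1)/(2l+1)$ and the sign $(-1)^{m/2}$.

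The main obstacle will be the geodesic integration-by-parts step. The boundary terms must vanish, which needs $\Gamma_Q$-invariance of the correct combination. The resulting constant $\mu$ must match exactly the products claimed in the Theorem. I would verify this by testing on the unfolded single term $Q=[0,\sqrt D,0]$, where $S_Q$ is the imaginary axis and everything reduces to beta integrals.
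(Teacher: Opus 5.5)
Your skeleton matches the paper's. You shift $k_2$ with a Maa{\ss} operator in the integration variable (Lemma~\ref{lem:PeterssonPoincareRaisingLowering}), turn this into a two-step recursion for the cycle integrals, and induct from \eqref{eq:ResultMono} and from the case $m=0$ for $f_{k,D}$. The gap is in the recursion itself, which the paper imports from Theorem~\ref{thm:ANS} and which you try to derive.

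First, your operator identities point the wrong way. Your own formula raises the power of $(z-\overline{\tau})^{-1}$, so what it shows is $L_{-2k_2}H_{k_1,k_2}=(k_1+k_2)H_{k_1,k_2+1}$. There is also no factor $-i$: in fact $L_\tau\bigl(v^{a}(z-\overline{\tau})^{-a}\bigr)=a\,v^{a+1}(z-\tau)(z-\overline{\tau})^{-a-1}$. The step from $H_{k+1,k-m}$ to $H_{k+1,k-m-2}$ needs $R_{-2k_2}H_{k_1,k_2}=(k_1-k_2)H_{k_1,k_2-1}$ instead.

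More seriously, the one-step geodesic identity $R_wF\,Q(z,1)^{w/2}\,dz=d(\ldots)+\lambda F\,Q(z,1)^{w/2-1}\,dz$ is false, and no identity of that shape can hold. Since $S_{-Q}$ is $S_Q$ with reversed orientation, $\mathcal{C}_w(F,-Q)=(-1)^{w/2}\mathcal{C}_w(F,Q)$. A relation between weights $w+2$ and $w$ with a $Q$-independent constant, valid for both $Q$ and $-Q$, therefore forces both sides to vanish.

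What integration by parts along $S_Q$ actually gives uses two facts. On $S_Q$ one has $Q(z,1)=iy\,\partial_zQ(z,1)$, which cancels the $\frac{w}{y}F$ term against the derivative of $Q^{w/2}$. Along $S_Q$ one also has $y^{-2}Q(z,1)^2\,d\overline{z}=D\,dz$. Together these give
\[
\mathcal{C}_{w+2}(R_wF,Q)=\mathcal{C}_{w-2}(L_wF,Q),
\]
so raising is traded for lowering, not for a scalar. Apply this to $R_wF$ with $F=H_{k_1,k_2}(z,\cdot\,)$ and $w=-2k_2$. Using both $R^2H_{k_1,k_2}=(k_1-k_2)(k_1-k_2+1)H_{k_1,k_2-2}$ and $LRH_{k_1,k_2}=(k_1-k_2)(k_1+k_2-1)H_{k_1,k_2}$, you get
\[
\mathcal{C}_{4-2k_2}(H_{k_1,k_2-2},Q)=\frac{k_1+k_2-1}{k_1-k_2+1}\,\mathcal{C}_{-2k_2}(H_{k_1,k_2},Q).
\]
These are the step factors $\frac{2k-m}{m+2}$ and $-\frac{m+1}{2k+m+1}$ of Lemma~\ref{lem:FactorsForMainTheorem}. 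So the lowering relation is indispensable: either quote Theorem~\ref{thm:ANS} as the paper does, or replace your one-step identity by the one above. Without it, your $\mu$ and $\kappa_m$ stay undetermined. Your guess $(l+1)/(k-l)$ is the reciprocal of the true step factor. Also, what caps $m$ at $2k$ is the vanishing of $2k-m$ at $m=2k$, not convergence.

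Your base case for $f_{k,D}$ takes a genuinely different and more direct route. The paper obtains Theorem~\ref{thm:fkDresult} from the Petersson coefficient formula, the Kohnen--Zagier period formula and Lipschitz summation; you unfold. Unfolding works:
the trace becomes $2\sum_{Q\in\mathcal{Q}_D}\int_{S_Q}Q(z,1)^{k-1}(z+\tau)^{-2k}\,dz$, with the $2$ coming from $\pm I\in\Gamma_Q$. Each integral equals $(-1)^{k-1}D^{k-\frac12}\frac{\Gamma(k)^2}{\Gamma(2k)}Q(-\tau,1)^{-k}$; note $Q(-\tau,1)$ and $D^{k-\frac12}$, not $Q(\tau,1)$ and $D^{\frac{1-k}{2}}$. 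Finally $\sum_{Q\in\mathcal{Q}_D}Q(-\tau,1)^{-k}=f_{k,D}(\tau)$.

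However, you assert rather than compute that this reproduces $(-1)^kk\binom{2k}{k}D^{-\frac k2}$. Take the sum over $\operatorname{SL}_2(\Z)$ as in Definition~\ref{defn:petpoincare} and the clockwise-for-$a>0$ orientation. Then the evaluation above yields $-\frac{k}{4}\binom{2k}{k}D^{-\frac k2}$ for even $k$. So the normalisation must be tracked explicitly and reconciled with Theorem~\ref{thm:fkDresult}, whose constant passes through Lemmas~\ref{lem:HyperbolicPeriods} and~\ref{lem:poincaresummation}; it cannot simply be read off.
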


\subsection{Decomposing $g_{k+1,D}$ and its relation to quantum modular forms} In 2010, Zagier \cite{zagier10} introduced the notion of quantum modular forms as an extension of modular forms to the rational numbers. A function $f: \Q \to \C$ is called a quantum modular form if for every matrix $\gamma \in \Gamma$ the function
\[h_\gamma(x) := f(x) - \left(f\big\vert_k\right)(x)\]
can be analytically continued to $\R \setminus S_\gamma$ where $S_\gamma$ is a finite set.

In \cite[Lemma 2.1]{mo21}, Mono provided the decomposition
\[g_{k+1,D}(\tau) = p_{k+1,D}(\tau) - 2P_{g_{k+1,D}}(\tau)\]
with
\[p_{k+1,D}(\tau) \coloneqq \sum_{Q \in \mathcal{Q}_D} \frac{\sgn(Q)}{Q(\tau,1)^{k+1}}\]
being Parson's modular integral and
\[P_{g_{k+1,D}}(\tau) \coloneqq \sum_{Q \in \mathcal{Q}_D} \frac{\sgn(Q)\mathbf{1}_Q(\tau)}{Q(\tau,1)^{k+1}} = 2\sum_{\substack{Q=[a,b,c] \in \mathcal{Q}_D \\ a > 0}} \frac{\mathbf{1}_Q(\tau)}{Q(\tau,1)^{k+1}}\]
being a rational function where $\mathbf{1}_Q$ is the indicator function of the bounded area described by $S_Q$ and the real line.

We can investigate the error to modularity of
\[\lim_{\tau \to x} P_{g_{k+1,D}}(\tau) = -2\sum_{\substack{Q = [a,b,c] \in \mathcal{Q}_D \\ a < 0 < Q(x,1)}}\frac{1}{Q(x,1)^{k+1}}, \qquad x \in \Q,\]
and thus offer the following lemma.

\begin{prop}\label{prop:QuantumForm}
    The modular transformation behavior of $P_{g_{k+1,D}}$ is given by
    \[P_{g_{k+1,D}}(\tau + 1) = P_{g_{k+1,D}}(\tau) \quad \text{and} \quad P_{g_{k+1,D}}\left(\frac{-1}{\tau}\right) = \tau^{2k+2}\left(P_{g_{k+1,D}}(\tau) - q_{k+1,D}(\tau)\right)\]
    for $\tau \in \H$. Moreover, the function $x \mapsto \lim_{\tau \to x} P_{g_{k+1,D}}(\tau)$ for $x \in \Q$ is a weak quantum modular form of weight $2k+2$ with Haupttypus for the whole modular group.
\end{prop}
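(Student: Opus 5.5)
The decomposition $g_{k+1,D} = p_{k+1,D} - 2P_{g_{k+1,D}}$ from \cite[Lemma 2.1]{mo21} will be the main tool. The function $g_{k+1,D}$ is modular of weight $2k+2$ on $\H\setminus E_D$, and by continuity of the rational pieces the identities below extend to all of $\H$ (away from poles). The transformation law of $P_{g_{k+1,D}}$ is then read off from those of $g_{k+1,D}$ and of Parson's modular integral $p_{k+1,D}$.

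For $p_{k+1,D}$ I will use the standard structure of Parson's function. The summand $\sgn(Q)Q(\tau,1)^{-k-1}$ is invariant under $\tau\mapsto\tau+1$, because $\sgn(Q)=\sgn(a)$ is preserved when $Q=[a,b,c]\mapsto[a,b+2a,\dots]$. This gives $p_{k+1,D}(\tau+1)=p_{k+1,D}(\tau)$. Under $S$, the form $Q$ is replaced by $Q\circ S=[c,-b,a]$, which gives
\[\tau^{-2k-2}p_{k+1,D}(-1/\tau)=\sum_{Q}\frac{\sgn(c)}{Q(\tau,1)^{k+1}}.\]
Hence
\[p_{k+1,D}(\tau)-\tau^{-2k-2}p_{k+1,D}(-1/\tau)=\sum_{Q}\frac{\sgn(a)-\sgn(c)}{Q(\tau,1)^{k+1}}=2\sum_{ac<0,\ a>0}\frac{1}{Q(\tau,1)^{k+1}}-2\sum_{ac<0,\ a<0}\frac{1}{Q(\tau,1)^{k+1}}.\]
Only the finitely many forms with $ac<0$ contribute. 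I will define $q_{k+1,D}$ as this rational function, or the appropriate multiple of it, and fix the normalisation at the end.

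Combining the two pieces: since $g_{k+1,D}$ is invariant under both $T$ and $S$ in weight $2k+2$, we get $2P_{g}(\tau+1)=2P_g(\tau)$, and
\[2\tau^{-2k-2}P_g(-1/\tau)=\tau^{-2k-2}p(-1/\tau)-g(\tau)=2P_g(\tau)-\bigl(p(\tau)-\tau^{-2k-2}p(-1/\tau)\bigr).\]
This yields the claimed formula, with $q_{k+1,D}=\tfrac12\bigl(p_{k+1,D}-p_{k+1,D}|_{2k+2}S\bigr)$. As a cross-check, I will verify the result directly: $\mathbf{1}_Q(\tau)$ is invariant under translation, and under $S$ the indicator $\mathbf{1}_{Q\circ S}(-1/\tau)$ differs from $\mathbf{1}_Q(\tau)$ exactly when the geodesic $S_Q$ separates $0$ from $\infty$, i.e.\ when $ac<0$. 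This recomputes $q_{k+1,D}$ as a finite sum and confirms the sign.

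For the quantum modular statement, I will take the limit $\tau\to x\in\Q$ in the transformation laws. The cocycles are $h_T=0$ and $h_S=q_{k+1,D}$. The function $q_{k+1,D}$ is a finite sum of rational functions whose poles are the real roots of forms with $ac<0$, so it is real-analytic on $\R$ minus finitely many points. The cocycle relation then propagates this to all $\gamma\in\Gamma$, since $\Gamma=\langle S,T\rangle$. The word ``weak'' is needed because the limit only exists on $\Q$ minus possibly some points.

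The main obstacle is justifying the limits. First, $P_{g_{k+1,D}}$ is a locally finite sum: only finitely many $Q$ with $a>0$ have $\mathbf{1}_Q(\tau)\ne0$ near a given point, and I will need to show this as $\tau$ approaches the real line. Second, the limit formula at rational $x$ must be handled when $x$ is an endpoint of some $S_Q$; this is impossible, since $D$ is not a square and the roots of $Q$ are irrational. For local finiteness, I will bound $|Q(\tau,1)|$ from below, using $Dv^2\le |Q(\tau,1)|^2$ together with the finiteness of reduced forms.
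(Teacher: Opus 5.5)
Your main line is the paper's argument: the decomposition $g_{k+1,D}=p_{k+1,D}-2P_{g_{k+1,D}}$, modularity of $g_{k+1,D}$, and Parson's transformation law for $p_{k+1,D}$. The paper cites Parson for that law, whereas you rederive it from $Q\circ S=[c,-b,a]$ and $\sgn(a)-\sgn(c)\neq 0\iff ac<0\iff b^2<D$. The quantum part is also the same: $h_T=0$ and $h_S=q_{k+1,D}$ with finitely many irrational poles, checked on the generators.

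Your indicator-function cross-check is actually a self-contained proof of the transformation law that does not need Mono's modularity of $g_{k+1,D}$. Both that check and the validity of the identity on $E_D$ need the convention $\mathbf{1}_Q=\tfrac12$ on $S_Q$. Continuity cannot give the extension to all of $\H$, because $P_{g_{k+1,D}}$ jumps across $E_D$.

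The step that would fail as planned is the justification of $\lim_{\tau\to x}$, which the paper simply takes for granted. $P_{g_{k+1,D}}$ is not locally finite at any real point. For a fixed form $Q$, the forms $Q\circ\left(\begin{smallmatrix}1&0\\n&1\end{smallmatrix}\right)$ give geodesics of radius $\asymp n^{-2}$ at distance $\asymp n^{-1}$ from $0$. Just inside them $|Q(\tau,1)|\approx\sqrt{D}v$, so along tangential approaches the sum picks up terms of size $\asymp n^{2k+2}$. The bound $|Q(\tau,1)|\geq\sqrt{D}v$ gives nothing as $v\to 0$.

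The limit therefore has to be taken vertically, or nontangentially. For $x\neq 0$, $S$ maps nontangential approach to $x$ onto nontangential approach to $-1/x$ by conformality, so you may pass to the limit in the transformation law. Finiteness then comes from integrality, not from reduced forms. For $x=r/s$ and $a>0$, the point $x+iv$ lies inside $S_Q$ if and only if $Q(x,1)<-av^2$. The identity $4aQ(x,1)=(2ax+b)^2-D$ together with $Q(r,s)\in\Z\setminus\{0\}$ gives $a\leq Ds^2/4$ and $|2ax+b|<\sqrt{D}$, so only finitely many forms ever contribute.

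Finally, ``weak'' refers to the regularity demanded of $h_\gamma$ (real-analytic off a finite set). It does not mean the limit fails at some rationals; as you observe yourself, the roots are irrational, so the limit exists at every $x\in\Q$.
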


\section{Preliminaries}

\subsection{Binary integral quadratic forms and cycle integrals}\label{subsec:BinaryAndCycleIntegrals} A binary integral quadratic form $Q(X,Y) = aX^2 + bXY + cY^2$ will be denoted by $Q = [a,b,c]$ throughout. There exists an intensively studied action of $\Gamma$ on $\mathcal{Q}_D$ which is given by
\[\left(Q \circ \begin{pmatrix}
    a_1 & a_2 \\ a_3 & a_4
\end{pmatrix}\right)(X,Y) = Q(a_1X + a_2Y, a_3X + a_4Y)\]
and harmonizes with the action of $\Gamma$ on $\H$ via
\[(Q \circ \gamma)(\tau,1) = j(\gamma,\tau)^2Q(\gamma\tau,1)\]
where $j(\gamma,\tau) := (a_3\tau + a_4)$ is the usual modular cocycle for $\gamma = \begin{pmatrix}
    a_1 & a_2 \\ a_3 & a_4
\end{pmatrix} \in \Gamma$. The semi-circles $S_Q$ are often called \emph{Heegner geodesics} and connect the two real roots of the quadratic polynomial $Q(\tau,1)$. If $f$ is a smooth function on $\H$ transforming like a weight $k$ modular form, the weight $k$ cycle integral of $f$ with $Q$ is defined as
\[\mathcal{C}_k(f,Q) := D^{\frac{1}{2}-\frac{k}{4}}\int_{\Gamma_Q\backslash S_Q}f(z)Q(z,1)^{\frac{k}{2}-1}\mathrm{d}z\]
where the orientation of the integral is clockwise if
\[0 < \sgn([a,b,c]) := \begin{cases}
    \sgn(a), &a \neq 0\\
    \sgn(c), &a = 0
\end{cases}\]
and counter-clockwise otherwise.

\subsection{Locally harmonic Maa{\ss} forms}\label{subsec:LocallyHarmonicMaass} Let $k \in \Z$, $f$ a function defined on a subset of $\H$, and $\gamma = \begin{pmatrix}
    a_1 & a_2 \\ a_3 & a_4
\end{pmatrix}\in \Gamma$, then the operation of the \emph{weight $k$ slash operator} of $\gamma$ on $f$ is defined via
\[(f\big\vert_k\gamma)(\tau) := j(\gamma,\tau)^{-k}f\left(\frac{a_1\tau + a_2}{a_3\tau + a_4}\right).\]
Let $N \in \N$. The function $f$ is called a \emph{harmonic Maa{\ss} form for}
\[\Gamma_0(N) := \left\{\begin{pmatrix}
        a_1 & a_2 \\ a_3 & a_4
    \end{pmatrix}
\in \Gamma : a_4 \equiv 0 \mod N \right\}\]
\emph{of weight $k$} if
\begin{enumerate}
    \item for $\tau \in \H$, we have
    \[(f\big\vert_k\gamma)(\tau) = f(\tau)\]
    for all $\gamma \in \Gamma_0(N)$,
    \item for $\Delta_k = -v^2 \left(\frac{\partial^2}{\partial u^2} + \frac{\partial^2}{\partial v^2}\right) + ikv \left(\frac{\partial}{\partial u} + i\frac{\partial}{\partial v}\right)$ being the weight $k$ hyperbolic Laplacian operator, we have $\Delta_k f \equiv 0$ on $\H$,
    \item there exists a polynomial $P_f(\tau) \in \mathbb{C}[q^{-1}]$ such that
    \[f(\tau) - P_f(\tau) \in O(\exp(-\varepsilon v))\]
    for an $\varepsilon > 0$ for $v \to \infty$, and analogous condition hold at every other cusp of $\Gamma_0(N)$.
\end{enumerate}
The vector space of all such functions is denoted by $H_k(N)$.

The function $f$ is called a \emph{locally harmonic Maa{\ss} form for $\Gamma_0(N)$ of weight $k$} if there exists an exceptional set $E \subset \H$ of measure 0 such that
\begin{enumerate}
    \item for $\tau \in \H \setminus E$, we have
    \[(f\big\vert_k\gamma)(\tau) = f(\tau)\]
    for all $\gamma \in \Gamma_0(N)$,
    \item for $\Delta_k = -v^2 \left(\frac{\partial^2}{\partial u^2} + \frac{\partial^2}{\partial v^2}\right) + ikv \left(\frac{\partial}{\partial u} + i\frac{\partial}{\partial v}\right)$ being the weight $k$ hyperbolic Laplacian operator, we have $\Delta_k f \equiv 0$ on $\H \setminus E$,
    \item for $\tau \in E$, we have
    \[\lim_{\varepsilon \searrow 0} \frac{1}{2}\left(f(\tau + i\varepsilon) + f(\tau - i\varepsilon)\right) = f(\tau),\]
    \item the growth of $f$ towards $i\infty$ is bounded by a polynomial.
\end{enumerate}
If a (locally) harmonic Maa{\ss} form vanishes towards infinity, it is called \emph{cuspidal} or a \emph{cusp form}.

An overview can be found in \cite{bfkr} for example.

\subsection{Differential operators and Eichler integrals}\label{subsec:DifferentialOperators} Let $k, N \in \N$ and
\[\mathbb{D} := \frac{\partial}{2\pi i\partial \tau}\]
be a differential operator. The mapping
\[\mathbb{D}^{k-1}: H_{2-k}(\Gamma_0(N)) \to M_k^!(\Gamma_0(N)), \qquad f \mapsto \mathbb{D}^{k-1}(f)\]
is known as the \emph{Bol operator}. Likewise, the mapping
\[\xi_{2-k}: H_{2-k}(\Gamma_0(N)) \twoheadrightarrow S_k(\Gamma_0(N)), \qquad f \mapsto 2iv^{2-k}\overline{\frac{\partial}{\partial \overline{\tau}}f}\]
is referred to as the \emph{shadow operator}. Each of these two operators is accompanied by the so-called \emph{holomorphic} or \emph{non-holomorphic Eichler integral}, respectively, which are defined for an $f \in S_k(\Gamma_0(N))$ as
\begin{align*}
    \mathcal{E}_f(\tau) &\coloneqq -\frac{(2\pi i)^{k-1}}{(k-2)!}\int_\tau^{i\infty}f(\omega)(\tau - \omega)^{k-2} \, \textup{d}\omega \quad \textup{and}\\
    f^*(\tau) &\coloneqq (2i)^{1-k}\int_{-\overline{\tau}}^{i\infty}\overline{f(-\overline{\omega})}(\omega + \tau)^{k-2} \, \textup{d}\omega.
\end{align*}
They satisfy the relations
\[\mathbb{D}^{k-1}(\mathcal{E}_f) = f \quad \textup{and} \quad \xi_{2-k}(f^*) = f\]
as well as
\[\mathbb{D}^{k-1}(f^*) = 0 \quad \textup{and} \quad \xi_{2-k}(\mathcal{E}_f) = 0.\]

Each of these four operators maybe extended to locally harmonic Maa\ss forms in a natural way. The Bol and the shadow operator originate from two operators that also preserve modularity but raise or lower the weight of a modular form by 2 instead of changing it from $2-k$ to $k$: the \emph{{Maa\ss} raising and lowering operators}.

\begin{defn}\label{defn:RaisingLoweringOperator}
    Let $N, k \in \Z$. Then the \emph{weight $k$ {Maa\ss} raising operator} is defined via
    \[R_k: H_k(\Gamma_0(N)) \mapsto H_{k+2}(\Gamma_0(N)), \qquad f \mapsto 2i\frac{\partial}{\partial \tau}f + \frac{kf}{v}\]
    and the \emph{weight $k$ {Maa\ss} lowering operator} via
    \[L_k: H_k(\Gamma_0(N)) \mapsto H_{k-2}(\Gamma_0(N)), \qquad f \mapsto -2iv^2\frac{\partial}{\partial\overline{\tau}}f.\]
\end{defn}

\begin{rmk}
    Both {Maa\ss} raising and lowering operator commute with the weight $k$ slash operator in the sense that for any $f \in H_k(\Gamma_0(N))$ and $\gamma \in \Gamma$, we have
    \[L_kf\Big|_{k-2} \gamma = L_k \left(f\Big|_k \gamma \right) \quad \text{and} \quad R_kf\Big|_{k+2} \gamma = R_k \left(f\Big|_k \gamma \right).\]

    Furthermore, we have the obvious relation
    \[\xi_k = -v^{k-2}\overline{L_k}\]
    and the somewhat more subtle but also more famous identity
    \[\mathbb{D}^{k-1} = (-4\pi)^{1-k}R_{2-k}^{k-1}\]
    which is known as \emph{Bol's identity}\footnote{Which gave rise to the name of the Bol operator.}. Here, $R_{2-k}^{k-1} := R_{k-2} \circ \ldots \circ R_{2-k}$ is the $k-1$ times iterated raising operator where we adjust the weight in each step. If not indicated differently, every operator is considered to be with respect to $\tau$.
\end{rmk}

\section{Zagier's $f_{k,D}$ function}

We define the usual holomorphic Poincar{\'e} series of exponential type ($\Gamma \coloneqq \mathrm{SL}_2(\Z)$)
\begin{align*}
P_{k,m}(\tau) \coloneqq \sum_{\gamma \in \Gamma_{\infty} \backslash \Gamma} \left(q^m \big\vert_{k}\gamma\right)(\tau), \qquad k > 2, \qquad m \in \N,
\end{align*}
where $q := \exp(2\pi i \tau)$. One can check that $P_{k,m} \in S_k(\Gamma)$ for such choices of parameters $k$ and $m$, see \cite{iwa97}*{Proposition 3.2} for example, and that $P_{k,m}$ has the following property.
\begin{lemma}[\protect{\cite{iwa97}*{Theorem 3.3}}] \label{lem:PeterssonCoefficientFormula}
Let $f(\tau) = \sum_{n \geq 1} a_f(n)q^n \in S_k(\Gamma)$, and $\langle\cdot,\cdot\rangle$ be the usual Petersson inner product. Then, we have
\begin{align*}
    \langle f,P_{k,m}\rangle = \frac{\Gamma(k-1)}{(4\pi m)^{k-1}}a_f(m).
\end{align*}
\end{lemma}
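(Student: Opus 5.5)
The statement proved here is Lemma \ref{lem:PeterssonCoefficientFormula}, the Petersson coefficient formula. The plan is the standard unfolding argument. We write
\[\langle f,P_{k,m}\rangle = \int_{\Gamma\backslash\H} f(\tau)\overline{P_{k,m}(\tau)}\,v^{k}\,\frac{\mathrm{d}u\,\mathrm{d}v}{v^2}.\]
We insert the definition of $P_{k,m}$ as a sum over $\Gamma_\infty\backslash\Gamma$ and use modularity of $f$. Because the measure $v^{k}\,\mathrm{d}u\,\mathrm{d}v/v^{2}$ combined with $f\overline{g}$ is $\Gamma$-invariant, we have
\[f(\tau)\overline{(q^m|_k\gamma)(\tau)}\,v^k=\bigl(f(\gamma\tau)\overline{e^{2\pi i m\gamma\tau}}\,\operatorname{Im}(\gamma\tau)^k\bigr).\]
This is the key identity, using $\operatorname{Im}(\gamma\tau)=v/|j(\gamma,\tau)|^2$ and $f(\gamma\tau)=j(\gamma,\tau)^k f(\tau)$.

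Unfolding then replaces $\Gamma\backslash\H$ by the strip $\Gamma_\infty\backslash\H=\{0\le u\le 1,\ v>0\}$, up to the harmless factor coming from $\pm I$. This factor is absorbed since $\Gamma_\infty$ is taken to contain $-I$, consistent with the normalization in \cite{iwa97}. The interchange of sum and integral is justified by absolute convergence. Here I expect the main technical point: for $k>2$ the series $\sum|j(\gamma,\tau)|^{-k}$ converges locally uniformly, and $f$ is cuspidal, so $|f(\tau)|v^{k/2}$ is bounded on $\H$. This gives absolute convergence of $\int_{\Gamma\backslash\H}\sum_\gamma |f|\,|q^m|_k\gamma|\,v^{k-2}$, via the same unfolding applied to absolute values together with $\int_0^\infty e^{-2\pi m v}v^{k/2-2}\,\mathrm{d}v<\infty$ for $k>2$.

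On the strip we compute
\[\int_0^\infty\!\!\int_0^1 \sum_{n\ge1}a_f(n)e^{2\pi i n u}e^{-2\pi n v}\,e^{-2\pi i m u}e^{-2\pi m v}\,v^{k-2}\,\mathrm{d}u\,\mathrm{d}v.\]
Orthogonality of characters in $u$ kills all $n\neq m$. What remains is $a_f(m)\int_0^\infty e^{-4\pi m v}v^{k-2}\,\mathrm{d}v=a_f(m)\Gamma(k-1)/(4\pi m)^{k-1}$, which is the claimed formula.
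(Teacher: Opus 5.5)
Your unfolding argument is correct and is the standard proof of the Petersson coefficient formula, which is what Iwaniec gives for Theorem 3.3; the paper itself offers no proof beyond citing that theorem. Your treatment of the normalization ($-I\in\Gamma_\infty$, so the unfolding to the strip is exact) and your absolute-convergence check via Tonelli, $|f(\tau)|v^{k/2}\ll 1$ and $\int_0^\infty e^{-2\pi m v}v^{k/2-2}\,\mathrm{d}v<\infty$ for $k>2$, are the right details to make it rigorous.
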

In fact, this property characterizes $P_{k,m}$, cf.\ \cite{koh85}*{eq. (4)}, and is sometimes attributed to Petersson as the \textit{Petersson coefficient formula}.

The idea is to combine Lemma \ref{lem:PeterssonCoefficientFormula} with its ``hyperbolic analogue'', which reads as follows.
\begin{lemma}[\protect{\cite{koza84}*{Section 3}, \cite{koh85}*{Proposition 7}}] \label{lem:HyperbolicPeriods}
Let $f \in S_{2k}(\Gamma)$. Then, we have
\begin{align*}
    \langle f, f_{k,D} \rangle = \pi \binom{2k-2}{k-1}2^{3-2k}D^{-\frac{k}{2}} \sum_{Q \in \mathcal{Q}_D \slash \Gamma} \mathcal{C}_{2k}(f,Q).
\end{align*}
\end{lemma}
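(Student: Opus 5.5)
The plan is to prove the identity by unfolding the Petersson inner product against the series defining $f_{k,D}$. Then, for each $\Gamma$-class of forms, I transport the resulting integral to a normalized situation in which the Heegner geodesic $S_Q$ is the imaginary axis. There a polar-coordinate computation separates the integral into an elementary angular integral times the cycle integral $\mathcal{C}_{2k}(f,Q)$.

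\textbf{Step 1 (unfolding).} Write $f_{k,D}(\tau)=\sum_{Q\in\mathcal{Q}_D/\Gamma}\sum_{\gamma\in\Gamma_Q\backslash\Gamma} (Q\circ\gamma)(\tau,1)^{-k}$. I will use $(Q\circ\gamma)(\tau,1)=j(\gamma,\tau)^2Q(\gamma\tau,1)$ to recognize each inner term as $Q(\cdot,1)^{-k}\big|_{2k}\gamma$. For $k\ge 2$ the sum converges absolutely and locally uniformly, and $f$ is cuspidal, so interchanging sum and integral is justified. The usual unfolding then gives
\[
\langle f,f_{k,D}\rangle=\sum_{Q\in\mathcal{Q}_D/\Gamma}\int_{\Gamma_Q\backslash\H} f(\tau)\,\overline{Q(\tau,1)^{-k}}\,v^{2k}\,\frac{du\,dv}{v^2},
\]
possibly up to a factor $\tfrac12$ coming from $\pm 1\in\Gamma$ acting trivially on $\H$ but being counted in $\Gamma$. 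I will track this factor carefully against the convention $\Gamma_Q\backslash S_Q$.

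\textbf{Step 2 (normalization).} Fix $Q$ and choose $M\in\mathrm{SL}_2(\R)$ with $M(i\R_+)=S_Q$, so that $(Q\circ M)(\tau,1)=\sqrt{D}\,\tau$ up to sign. Conjugating $\Gamma_Q$ by $M$ turns it into $\{\pm\left(\begin{smallmatrix}\varepsilon&0\\0&\varepsilon^{-1}\end{smallmatrix}\right)^n\}$, where $\varepsilon>1$ is the associated fundamental unit. Define $\tilde f:=f|_{2k}M$. Then the function
\[
g(\tau):=\tilde f(\tau)\tau^{k}
\]
is holomorphic on $\H$ and invariant under $\tau\mapsto\varepsilon^2\tau$. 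Substituting $\tau\mapsto M\tau$, the integral becomes $D^{-k/2}$ times
\[
\int_{\tau:\,1\le|\tau|<\varepsilon^2} g(\tau)\,\tau^{-k}\,\overline{\tau}^{-k}\,v^{2k-2}\,du\,dv .
\]

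\textbf{Step 3 (polar coordinates and Cauchy).} Write $\tau=te^{i\theta}$ with $1\le t<\varepsilon^2$ and $0<\theta<\pi$. Then $\tau^{-k}\overline{\tau}^{-k}v^{2k-2}\,du\,dv=\sin^{2k-2}\theta\,\frac{dt}{t}\,d\theta$. The key observation is that $I(\theta):=\int_1^{\varepsilon^2}g(te^{i\theta})\frac{dt}{t}$ does not depend on $\theta$. Indeed, by Cauchy's theorem applied to the closed contour formed by the two rays at angles $\theta$ and $\pi/2$ and the two arcs $|\tau|=1$ and $|\tau|=\varepsilon^2$, the contributions of the two arcs cancel because $g(\varepsilon^2\tau)=g(\tau)$. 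Hence the integral factors as
\[
I(\pi/2)\cdot\int_0^\pi\sin^{2k-2}\theta\,d\theta=I(\pi/2)\cdot\pi\binom{2k-2}{k-1}2^{2-2k}.
\]
Undoing $M$, the factor $I(\pi/2)=\int_1^{\varepsilon^2}g(it)\frac{dt}{t}$ equals, up to a power of $i$ and of $\sqrt D$, the integral $\int_{\Gamma_Q\backslash S_Q}f(z)Q(z,1)^{k-1}\,dz$. This is $D^{\frac12-\frac{2k}{4}}$-normalized to $\mathcal{C}_{2k}(f,Q)$.

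\textbf{Main obstacle.} The analytic steps are standard. I expect the real difficulty to be the bookkeeping of constants, namely:
\begin{itemize}
\item the powers of $D$ from the normalization $Q\circ M$;
\item the factor $2$ relating $2^{2-2k}$ to the claimed $2^{3-2k}$ (from $\pm\gamma$ or the pair $Q,-Q$);
\item the sign and orientation convention $\sgn(Q)$ in $\mathcal{C}_{2k}$, which must match the direction of the $t$-integration.
\end{itemize}
I would check these against the special case $D$ with a simple reduced form, such as $Q=[1,b,c]$, to fix all signs.
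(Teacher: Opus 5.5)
Your Steps 1--3 are the classical Kohnen--Zagier unfolding argument. That is exactly what the paper relies on: it gives no proof and only cites Kohnen--Zagier and Kohnen. The analytic part is correct. Absolute convergence for $k\ge 2$ justifies the unfolding, $g(\varepsilon^2\tau)=g(\tau)$ makes $I(\theta)$ independent of $\theta$, and $\int_0^\pi\sin^{2k-2}\theta\,d\theta=\pi\binom{2k-2}{k-1}2^{2-2k}$.

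The gap is in the step you postpone: the factor $2$ you expect to recover is not there. Since $Q\circ(-1)=Q$, we have $-1\in\Gamma_Q$. Hence $\gamma\mapsto Q\circ\gamma$ is a bijection $\Gamma_Q\backslash\Gamma\to[Q]$, and the unfolding $\int_{\Gamma\backslash\H}\sum_{\gamma\in\Gamma_Q\backslash\Gamma}F(\gamma\tau)\,d\mu=\int_{\Gamma_Q\backslash\H}F\,d\mu$ is exact. A factor $\frac12$ would in any case push you to $2^{1-2k}$, the wrong direction. The pair $Q,-Q$ contributes nothing either: $f_{k,D}$ runs over all of $\mathcal{Q}_D$, and the right-hand side over all of $\mathcal{Q}_D/\Gamma$. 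Moreover $\mathcal{C}_{2k}(f,-Q)=\mathcal{C}_{2k}(f,Q)$, because both $Q^{k-1}$ (as $k$ is even) and the orientation change sign.

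Finish the bookkeeping with $M\in\mathrm{SL}_2(\R)$ sending $0$ and $\infty$ to the smaller and the larger root of $Q(\tau,1)$. Then $(Q\circ M)(\tau,1)=-\sgn(Q)\sqrt{D}\,\tau$, and the path $t\mapsto M(it)$ runs clockwise. No power of $i$ occurs, since $d\tau/\tau=dt/t$ on $i\R_+$. With the orientation fixed in Subsection \ref{subsec:BinaryAndCycleIntegrals}, this gives $I(\pi/2)=-\mathcal{C}_{2k}(f,Q)$, hence
\[
\langle f,f_{k,D}\rangle=-\pi\binom{2k-2}{k-1}2^{2-2k}D^{-\frac{k}{2}}\sum_{Q\in\mathcal{Q}_D/\Gamma}\mathcal{C}_{2k}(f,Q).
\]
Under the more common convention (counterclockwise for $a>0$) the sign is a plus. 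Equivalently, the constant in front of $\sum_{Q}\int_{\Gamma_Q\backslash S_Q}f(z)Q(z,1)^{k-1}\,dz$ is $\pi\binom{2k-2}{k-1}2^{2-2k}D^{\frac12-k}$, which is the Kohnen--Zagier constant.

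So your argument proves the identity with $2^{2-2k}$ and a convention-dependent sign, not the displayed $2^{3-2k}$. Under the paper's normalizations, no correct accounting yields the extra factor $2$. Those normalizations are the unnormalized Petersson product over $\mathrm{SL}_2(\Z)\backslash\H$, for which Lemma \ref{lem:PeterssonCoefficientFormula} holds, and $f_{k,D}$ summed over all of $\mathcal{Q}_D$. Rather than hunting for the factor in $\pm1$ or $Q\leftrightarrow -Q$, state the constant your computation actually produces and flag the mismatch with the statement as printed.
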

Note that the cycle integrals on the right hand side are called {\it hyperbolic periods} sometimes.

If $D > 0$ and $k > 2$, Zagier \cite{zagier75}*{Appendix 2} proved that $f_{k,D} \in S_{2k}(\Gamma)$ and calculated its Fourier coefficients $a_{f_{k,D}}(n)$ in terms of quadratic \emph{Weyl} sums (special Kloosterman sums) and the usual $J$-Bessel function. 
\begin{lemma} \label{lem:fkDfourier}
If $D > 0$ and $k > 2$, then the Fourier coefficients of the function $f_{k,D}$ are real.
\end{lemma}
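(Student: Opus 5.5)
The plan is to show that every Fourier coefficient $a_{f_{k,D}}(n)$ equals its own complex conjugate. We use the Petersson coefficient formula and its hyperbolic analogue, avoiding any explicit evaluation of Zagier's Weyl-sum/Bessel expressions. Since $P_{2k,n} \in S_{2k}(\Gamma)$, Lemma \ref{lem:PeterssonCoefficientFormula} applied to $f = f_{k,D}$ gives
\[
a_{f_{k,D}}(n) = \frac{(4\pi n)^{2k-1}}{\Gamma(2k-1)}\langle f_{k,D}, P_{2k,n}\rangle = \frac{(4\pi n)^{2k-1}}{\Gamma(2k-1)}\overline{\langle P_{2k,n}, f_{k,D}\rangle}.
\]
Applying Lemma \ref{lem:HyperbolicPeriods} with $f = P_{2k,n}$ expresses $\langle P_{2k,n}, f_{k,D}\rangle$ as a real constant times $\sum_{Q \in \mathcal{Q}_D/\Gamma}\mathcal{C}_{2k}(P_{2k,n},Q)$. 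It therefore suffices to show that this trace of cycle integrals is real.

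For the reality of the trace, the key tool is the involution $Q=[a,b,c] \mapsto Q^- := [-a,b,-c]$. It is a bijection of $\mathcal{Q}_D$ that descends to $\mathcal{Q}_D/\Gamma$, since conjugating by $\operatorname{diag}(1,-1)$ normalizes $\Gamma$. Geometrically it corresponds to the reflection $z \mapsto -\overline{z}$, which maps $S_Q$ to $S_{Q^-}$, and one checks $Q^-(-\overline z,1) = -\overline{Q(z,1)}$. Moreover, $P_{2k,n}$ has real Fourier coefficients: its defining series is invariant under $\gamma \mapsto \operatorname{diag}(1,-1)\gamma\operatorname{diag}(1,-1)$, which gives $\overline{P_{2k,n}(-\overline z)} = P_{2k,n}(z)$.

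Substituting $z \mapsto -\overline z$ in $\mathcal{C}_{2k}(P_{2k,n},Q^-)$ and conjugating, we obtain
\[
\overline{\mathcal{C}_{2k}(P_{2k,n},Q)} = \pm\,\mathcal{C}_{2k}(P_{2k,n},Q^-).
\]
Here the power $Q^{k-1}$ contributes $(-1)^{k-1}$, the differential $\mathrm{d}z$ contributes $-\mathrm{d}\overline z$, and the reflection reverses orientation, while $\sgn(Q^-) = -\sgn(Q)$ also flips the prescribed orientation. Since $k$ is even, these signs should combine to $+1$. Summing over the classes, which $Q\mapsto Q^-$ permutes, the trace equals its own conjugate, so it is real.

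The main obstacle is the sign bookkeeping in this last step. One has to track the orientation convention depending on $\sgn(Q)$, the parity $(-1)^{k-1}$, the conjugation of $\mathrm{d}z$, and the fact that a fundamental arc $\Gamma_Q\backslash S_Q$ maps to a fundamental arc for $\Gamma_{Q^-}$. If a global sign of $-1$ survived, the trace would instead be purely imaginary, which contradicts positivity for $n$ where $\langle f_{k,D}, f_{k,D}\rangle$ dominates. So I would double-check the sign on a simple case, such as comparing with Zagier's explicit real-looking formula. An alternative route avoiding cycle integrals is direct: $\overline{f_{k,D}(-\overline\tau)} = \sum_Q \overline{Q(-\overline\tau,1)}^{-k} = \sum_Q Q^-(\tau,1)^{-k}\cdot(-1)^{k} = f_{k,D}(\tau)$ because $k$ is even and $Q\mapsto Q^-$ permutes $\mathcal{Q}_D$. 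This immediately yields real Fourier coefficients, and I would present this as the actual proof, keeping the Petersson argument as a consistency check.
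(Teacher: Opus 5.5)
Your closing ``alternative route'' is exactly the paper's proof. The paper verifies $\overline{f_{k,D}(-\overline{\tau})} = f_{k,D}(\tau)$ by reindexing $b\mapsto -b$, i.e.\ via $[a,b,c]\mapsto[a,-b,c]$. This differs from your involution $Q\mapsto Q^-=[-a,b,-c]$ only by $Q\mapsto -Q$, which is harmless since $k$ is even. So presenting that as the actual proof is right.

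Your main Petersson/cycle-integral route is genuinely different, and it is also valid. The sign you hedged on is in fact $+1$:
\begin{itemize}
\item the reflection $z\mapsto-\overline{z}$ reverses orientation;
\item $\sgn(Q^-)=-\sgn(Q)$ swaps the prescribed orientation, so these two effects cancel;
\item what remains is $(-1)^{k-1}\cdot(-1)=(-1)^k=1$, hence $\mathcal{C}_{2k}(P_{2k,n},Q^-)=\overline{\mathcal{C}_{2k}(P_{2k,n},Q)}$.
\end{itemize}

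Compared with the paper, this route runs the logic in reverse. The paper uses this lemma to get $\langle f_{k,D},P_{2k,m}\rangle=\langle P_{2k,m},f_{k,D}\rangle$ on the way to Theorem~\ref{thm:fkDresult}. Your route is not circular, but it costs more: the cited Lemma~\ref{lem:HyperbolicPeriods}, the reality of the Fourier coefficients of $P_{2k,n}$, and the check that fundamental arcs for $\Gamma_Q$ map to fundamental arcs for $\Gamma_{Q^-}$. The direct argument needs only absolute convergence of the defining series and uniqueness of Fourier expansions.

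Finally, drop the ``positivity'' sanity check, because it is not a valid argument. If all traces were purely imaginary, then $f_{k,D}$ would simply be $i$ times a cusp form with real coefficients, and that is perfectly compatible with $\langle f_{k,D},f_{k,D}\rangle>0$.
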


\begin{proof}
We observe that
\begin{align*}
\overline{f_{k,D}(-\overline{\tau})} = \overline{\left(\sum_{b^2-4ac = D} \frac{1}{\left(a(-\overline{\tau})^2+b(-\overline{\tau})+c\right)^k}\right)} = \sum_{b^2-4ac = D} \frac{1}{\left(a\tau^2-b\tau+c\right)^k} = f_{k,D}(\tau),
\end{align*}
where we mapped $b \mapsto -b$ in the last step. Clearly, we have $\overline{e^{2\pi i m (-\overline{\tau})}} = e^{2\pi i m \tau}$, and hence $\overline{a_{f_{k,D}}(n)} = a_{f_{k,D}}(n)$ for every $n \in \N$.
\end{proof}

By Lemma \ref{lem:PeterssonCoefficientFormula}, we obtain
\begin{align*}
    a_{f_{k,D}}(m) = \frac{(4\pi m)^{2k-1}}{\Gamma(2k-1)} \langle f_{k,D},P_{2k,m}\rangle,
\end{align*}
and we deduce from Lemma \ref{lem:fkDfourier} that
\begin{align*}
\langle f_{k,D},P_{2k,m}\rangle = \overline{\langle f_{k,D},P_{2k,m}\rangle} = \langle P_{2k,m}, f_{k,D}\rangle.
\end{align*}
Consequently, we obtain
\begin{align*}
    f_{k,D}(\tau) &= \sum_{m \geq 1} a_{f_{k,D}}(m) q^m = \frac{(4\pi)^{2k-1}}{\Gamma(2k-1)} \sum_{m \geq 1} m^{2k-1} \langle f_{k,D},P_{2k,m}\rangle q^m \\
				&= \frac{(4\pi)^{2k-1}}{\Gamma(2k-1)} \sum_{m \geq 1} m^{2k-1} \langle P_{2k,m}, f_{k,D}\rangle q^m.
\end{align*}
In other words, we use that $\{P_{2k,m} \colon m \in \N\}$ generates $f \in S_{2k}(\Gamma)$. Next, we utilize Lemma \ref{lem:HyperbolicPeriods} to rewrite the right hand side as
\begin{multline*}
    f_{k,D}(\tau) \\
= \pi \binom{2k-2}{k-1}2^{3-2k}D^{-\frac{k}{2}} \frac{(4\pi)^{2k-1}}{\Gamma(2k-1)} \sum_{m \geq 1} m^{2k-1} \sum_{Q \in \mathcal{Q}_D \slash \Gamma} \mathcal{C}_{2k}(P_{2k,m},Q)q^m \\
        = \frac{2 \cdot (2\pi)^{2k}D^{\frac{1}{2}-k}}{\Gamma(k)^2} \sum_{Q \in \mathcal{Q}_D \slash \Gamma} \int_{\Gamma_Q \backslash S_Q} \left(\sum_{m \geq 1} m^{2k-1} P_{2k,m}(z)q^m \right) Q(z,1)^{k-1}\mathrm{d}z.
\end{multline*}

The generating function of $P_{2k,m}$ inside the cycle integral is known. Before stating the central theorem of this section, we define another one of Petersson's Poincar{\'e} series \cites{pet40,pet50}.

\begin{defn} \label{defn:petpoincare}
Let $k \geq 4$ and $\tau$, $z\in\H$. Then, we define
\begin{align*}
\mathscr{P}_k(z,\tau) \coloneqq \sum_{\gamma \in \Gamma} \frac{1}{\left(z+\tau\right)^k}\Big\vert_{k,z}\gamma.
\end{align*}
\end{defn}Now, we are in position to prove the following lemma.

\begin{lemma} \label{lem:poincaresummation}
For every $z$, $\tau \in \H$ and $k > 1$, it holds that
\begin{align*}
\frac{(2\pi i)^{2k}}{(2k-1)!} \sum_{m \geq 1} m^{2k-1} P_{2k,m}(z)q^m = \mathscr{P}_{2k}(z,\tau).
\end{align*}
\end{lemma}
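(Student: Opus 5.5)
The plan is to unfold the Poincaré series $P_{2k,m}(z)$ into its coset sum, interchange the summations over $m$ and over $\gamma$, and then evaluate the resulting inner sum over $m$ in closed form using the Lipschitz summation formula.

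Write $q_z := e^{2\pi i z}$, so that
\[
P_{2k,m}(z) = \sum_{\gamma \in \Gamma_\infty\backslash\Gamma} \bigl(q_z^m\bigr)\big\vert_{2k,z}\gamma .
\]
Multiplying by $m^{2k-1}q^m$ (with $q = e^{2\pi i\tau}$, which does not depend on $z$) gives
\[
\sum_{m\ge1} m^{2k-1}P_{2k,m}(z)q^m = \sum_{\gamma\in\Gamma_\infty\backslash\Gamma}\Bigl(\sum_{m\ge1} m^{2k-1}e^{2\pi i m(z+\tau)}\Bigr)\Big\vert_{2k,z}\gamma .
\]
Here the slash operator acts only in $z$, so it may be pulled outside the $m$-sum for each fixed $\gamma$.

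For the inner sum we use the Lipschitz formula, valid for $w\in\H$ and $s\ge 2$:
\[
\sum_{n\in\Z}\frac{1}{(w+n)^s} = \frac{(-2\pi i)^s}{(s-1)!}\sum_{m\ge1} m^{s-1}e^{2\pi i m w}.
\]
We apply it with $w = \gamma z + \tau\in\H$ and $s = 2k$. Since $2k$ is even, $(-2\pi i)^{2k} = (2\pi i)^{2k}$, and therefore
\[
\frac{(2\pi i)^{2k}}{(2k-1)!}\sum_{m\ge1} m^{2k-1}e^{2\pi i m(z+\tau)} = \sum_{n\in\Z}\frac{1}{(z+n+\tau)^{2k}} = \sum_{n\in\Z}\frac{1}{(z+\tau)^{2k}}\Big\vert_{2k,z}T^n ,
\]
where $T = \left(\begin{smallmatrix}1&1\\0&1\end{smallmatrix}\right)$.

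Combining the two displays, the sums over $n$ and over $\Gamma_\infty\backslash\Gamma$ assemble into a single sum over $\Gamma$. One point needs care here: $\Gamma_\infty$ contains $-I$, whereas $\{T^n : n\in\Z\}$ does not. However, $-I$ acts trivially under $\vert_{2k}$ because $2k$ is even. So, with the paper's convention for $\Gamma_\infty$, the combined sum equals either exactly $\sum_{\gamma\in\Gamma}\frac{1}{(z+\tau)^{2k}}\big\vert_{2k,z}\gamma = \mathscr{P}_{2k}(z,\tau)$ or that sum up to a factor of $2$. I would check this normalization against Definition \ref{defn:petpoincare} (the series being summed over $\Gamma$ modulo $\pm I$) so that the constant matches exactly.

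The main obstacle is justifying the interchange of the summations over $m$ and over $\gamma$, and the regrouping, which requires absolute convergence. Since $|e^{2\pi i m w}| = e^{-2\pi m\,\im(w)}$, the double sum
\[
\sum_{\gamma}\sum_{m} m^{2k-1}\,|j(\gamma,z)|^{-2k}\,e^{-2\pi m(\im(\gamma z)+v)}
\]
is bounded by a constant times $\sum_{\gamma}|j(\gamma,z)|^{-2k}$ (the $m$-sum converges uniformly because $v>0$), and this Eisenstein-type sum converges for $2k>2$. Equivalently, the final series over $\Gamma$ converges absolutely for $2k\ge4$. This justifies Fubini and gives the identity for $k\ge2$, which is the range in which $\mathscr{P}_{2k}$ is defined.
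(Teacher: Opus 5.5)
Your argument is the paper's proof run in the opposite direction. The paper starts from $\mathscr{P}_{2k}$, writes the sum over $\Gamma$ as translations $T^n$ times coset representatives, and applies the Lipschitz formula to the inner $n$-sum, exactly as you do; your absolute-convergence check supplies the justification the paper omits.

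On the normalization you left open: Definition \ref{defn:petpoincare} sums over all of $\Gamma$, not over $\Gamma$ modulo $\pm I$. So the identity holds exactly when $\Gamma_\infty=\{T^n : n\in\Z\}$, which is what the paper's regrouping tacitly uses. With $\Gamma_\infty=\{\pm T^n\}$ the left-hand side would have to be doubled, so you should fix that convention rather than leave the factor $2$ undecided.
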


\begin{proof}
We recall the Lipschitz summation formula
\begin{align*}
\sum_{n \in \Z} \frac{1}{(w+n)^{\kappa}} = \frac{(-2\pi i)^{\kappa}}{(\kappa-1)!}\sum_{m\geq 1} m^{\kappa-1} e^{2\pi i r w},
\end{align*}
valid for $\kappa \in \N_{\geq 2}$, $w \in \H$, and which can be found in \cite{123}*{p.\ $16$} for instance. We deduce that
\begin{align*}
\mathscr{P}_{2k}(z,\tau) &= \sum_{\gamma \in \Gamma \slash\Gamma_{\infty}} \sum_{n \in \Z} \frac{1}{\left((z+n)+\tau\right)^{2k}}\Big\vert_{{2k},z}\gamma = \frac{(-2\pi i)^{2k}}{(2k-1)!} \sum_{\gamma \in \Gamma \slash\Gamma_{\infty}} \sum_{m\geq 1} m^{2k-1} e^{2\pi i m (z+\tau)} \Big\vert_{{2k},z}\gamma \\
&= \frac{(2\pi i)^{2k}}{(2k-1)!} \sum_{m \geq 1} m^{2k-1} P_{2k,m}(z)q^m,
\end{align*}
as claimed.
\end{proof}

\begin{rmks}
\
\begin{enumerate}
\item This summation formula is the ``parabolic analogue'' of the hyperbolic summation formula
\begin{align*}
\Omega_{k}(\tau,z) = \sum_{D > 0} D^{k-\frac{1}{2}} f_{k,D}(z)q^D.
\end{align*}
Kohnen and Zagier \cites{koza81, koza84} observed that $\Omega_{k}$ is the theta kernel function for the Shimura \cite{shim} and Shintani \cite{shin} lift. Kohnen \cite{koh85} generalized this observation to odd and square-free level. More recently, Ueda and Yamana \cite{ueya} extended Kohnen's results to even and square-free level.
\item Bringmann and Kane \cite{brika20}*{Section $3$} generalized Lemma \ref{lem:poincaresummation} to the generating function of Maa{\ss}--Poincar{\'e} series, which yields an alternative proof of Lemma \ref{lem:poincaresummation}. To be more precise, one may apply the operator $\xi_{2-2k}$ with respect to $z$ to the identities \cite{brika20}*{equation (3.9), Proposition 3.3}, and note that the functions in both equations coincide. Then, we apply \cite{brika20}*{(2.4), (3.7)} to rewrite each side of the resulting equation. However, this needs the restricition $\im(\tau) > \max\big\{\im(z), \frac{1}{\im(z)}\big\}$ and the identity
\begin{align*}
\mathscr{P}_{2k}(z,-\overline{\tau}) = \overline{\mathscr{P}_{2k}(-\overline{z},\tau)}.
\end{align*}
\end{enumerate}
\end{rmks}

Finally, we check the following claim.
\begin{lemma}\label{lem:CurlyPCuspForm}
The function $z \mapsto \mathscr{P}_{k}(z,\tau)$ is a cusp form for every $k \geq 4$ even, and we have $\mathscr{P}_{k}(z,\tau) = \mathscr{P}_{k}(\tau,z)$.
\end{lemma}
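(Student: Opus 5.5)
The plan has three steps: prove convergence, deduce that $\mathscr{P}_k(\cdot,\tau)$ is a cusp form in $z$, and prove the symmetry $\mathscr{P}_k(z,\tau)=\mathscr{P}_k(\tau,z)$ by reindexing the sum over $\Gamma$.

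\textbf{Convergence and modularity.} First I show that for fixed $\tau\in\H$ the series defining $\mathscr{P}_k(z,\tau)$ converges absolutely and locally uniformly in $z$. For $\gamma=\begin{pmatrix} a_1&a_2\\a_3&a_4\end{pmatrix}$ the summand is
\[
j(\gamma,z)^{-k}(\gamma z+\tau)^{-k}=\bigl(a_1z+a_2+\tau(a_3z+a_4)\bigr)^{-k}.
\]
This is $Q_\gamma(z,\tau)^{-k}$ for the bilinear expression $Q_\gamma(z,\tau)=a_1z+a_2+a_3z\tau+a_4\tau$, which is visibly symmetric under $(a_1,a_2,a_3,a_4)\mapsto(a_4,a_2,a_3,a_1)$ together with $z\leftrightarrow\tau$. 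For $z,\tau$ in compact subsets of $\H$, the quantity $|a_1z+a_2+\tau(a_3z+a_4)|$ is bounded below by a constant times $\|\gamma\|$. One can see this by writing it as $j(\gamma,z)(\gamma z+\tau)$ and noting $|\gamma z+\tau|\ge \operatorname{Im}(\tau)$, or by a positive-definiteness argument with the four real coordinates. The sum $\sum_\gamma\|\gamma\|^{-k}$ over $\Gamma$ converges for $k>2$ (it behaves like the Eisenstein-type sum over four-vectors of norm squared $\ge \|\gamma\|^2$ on a three-dimensional quadric, where the count of $\gamma$ with $\|\gamma\|\le T$ is $\asymp T^2$). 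Hence the series converges absolutely and locally uniformly for $k\ge4$, and it is holomorphic in $z$. Modularity in $z$ of weight $k$ is then immediate by reindexing the sum over $\Gamma$.

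\textbf{Cuspidality.} Lemma \ref{lem:poincaresummation} (with $2k$ replaced by $k$, which is even) expresses $\mathscr{P}_k(z,\tau)$ as $\frac{(2\pi i)^k}{(k-1)!}\sum_{m\ge1}m^{k-1}P_{k,m}(z)e^{2\pi i m\tau}$. Each $P_{k,m}$ is a cusp form, and the sum converges absolutely, since $P_{k,m}(z)$ grows at most polynomially in $m$ for fixed $z$ while $e^{2\pi i m\tau}$ decays exponentially. So the $z$-expansion of $\mathscr{P}_k(\cdot,\tau)$ has only positive powers of $e^{2\pi i z}$, and it is a cusp form. Alternatively, the unfolding in the proof of Lemma \ref{lem:poincaresummation} writes $\mathscr{P}_k$ as $\sum_{\Gamma/\Gamma_\infty}$ of $\sum_{m\ge1}(\cdots)e^{2\pi i m(z+\tau)}$, and this decays at $i\infty$ term by term with uniform domination.

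\textbf{Symmetry.} For the map $\gamma\mapsto\gamma':=\begin{pmatrix} a_4&a_2\\a_3&a_1\end{pmatrix}$, which is a bijection of $\Gamma$ (determinant $a_1a_4-a_2a_3=1$ is preserved), we have $Q_\gamma(z,\tau)=Q_{\gamma'}(\tau,z)$. Summing over $\gamma$ and using absolute convergence gives $\mathscr{P}_k(z,\tau)=\mathscr{P}_k(\tau,z)$.

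The main obstacle is the uniform lower bound $|Q_\gamma(z,\tau)|\gg\|\gamma\|$ needed for convergence. I would handle it via $|j(\gamma,z)|^2\gg \|(a_3,a_4)\|^2$ and $|\gamma z+\tau|\ge \operatorname{Im}\tau$. This alone does not control the $(a_1,a_2)$ part, so I would combine it with $|a_1z+a_2+\tau(a_3z+a_4)|\ge$ the distance-type bound obtained by symmetrising the roles of $z$ and $\tau$.
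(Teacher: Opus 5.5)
Your argument for both assertions follows the paper's. Cuspidality comes from Lemma~\ref{lem:poincaresummation} together with $P_{k,m}\in S_k(\Gamma)$. Your justification of the $m$-sum is right, and in fact $|P_{k,m}(z)|\le\sum_{\Gamma_\infty\backslash\Gamma}|j(\gamma,z)|^{-k}$ uniformly in $m$. Symmetry comes from the bijection $\gamma\mapsto\gamma'=\left(\begin{smallmatrix}a_4&a_2\\a_3&a_1\end{smallmatrix}\right)$. This is exactly what the paper's cocycle identity with $w=-\tau$ produces. The paper's final equality $-\gamma^{-1}(-\tau)=\gamma^{-1}\tau$ should read $\gamma'\tau$, with $j(\gamma^{-1},-\tau)=a_3\tau+a_1=j(\gamma',\tau)$. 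Your direct formulation with $Q_\gamma(z,\tau)=Q_{\gamma'}(\tau,z)$ avoids that slip.

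The one step that does not work as you sketch it is the convergence paragraph. The paper simply takes convergence for granted from Petersson, but you made it your ``main obstacle''. The bounds $|Q_\gamma|\gg\|(a_3,a_4)\|$ and, after symmetrising, $|Q_\gamma|\gg\|(a_3,a_1)\|$ never control $a_2$. For $\gamma=\left(\begin{smallmatrix}1&n\\0&1\end{smallmatrix}\right)$ both are $O(1)$, while $\|\gamma\|\approx|n|$.

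A ``positive-definiteness argument with the four real coordinates'' also fails as stated. $|Q_\gamma|^2$ is the squared modulus of an $\R$-linear map $\R^4\to\C$, so it has rank $2$, and any coercivity must come from $\det\gamma=1$.

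The clean fix is the unfolding you already invoke for cuspidality. Write each $\gamma$ uniquely as $T^n\delta$ with $\delta\in\langle T\rangle\backslash\Gamma$. Then
\[
\sum_{\gamma\in\Gamma}|Q_\gamma(z,\tau)|^{-k}=\sum_{\delta}|j(\delta,z)|^{-k}\sum_{n\in\Z}|\delta z+\tau+n|^{-k}\le C_k(v)\sum_{\delta}|j(\delta,z)|^{-k},
\]
since $\operatorname{Im}(\delta z+\tau)>v$. The right-hand side is dominated by the weight $k$ Eisenstein series, locally uniformly in $z$.

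Two alternatives also work:
\begin{itemize}
\item The identity $|Q_\gamma(z,\tau)|^2=2yv\,\bigl(1+\cosh d(\gamma z,-\overline{\tau})\bigr)$, combined with $\cosh d(\gamma i,i)=\|\gamma\|^2/2$ and the triangle inequality, gives your claimed $|Q_\gamma|\gg\|\gamma\|$ directly.
\item Your combined bound $|Q_\gamma|\gg\max(|a_1|,|a_3|,|a_4|)$ already suffices for $k\ge4$. When $a_3\neq0$, $a_2$ is determined by $(a_1,a_3,a_4)$, and there are only $\asymp T^3$ such triples of size at most $T$. The $a_3=0$ terms are $\bigl(\pm(z+\tau)+n\bigr)^{-k}$, which are summable directly.
\end{itemize}
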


\begin{proof}
The first assertion follows by Lemma \ref{lem:poincaresummation}, because $P_{k,m} \in S_{k}(\Gamma)$ for every $k \geq 4$ even and every $m \in \N$. To verify the second assertion, let $\gamma = \left(\begin{smallmatrix} a_1 & a_2 \\ a_3 & a_4 \end{smallmatrix}\right) \in \Gamma$ and $j(\gamma, \tau) = a_3\tau+a_4$ denote the modular cocyle. One first checks that
\begin{align*}
j(\gamma^{-1},w)(z-\gamma^{-1} w) = j\left(\gamma,z\right)\left(\gamma z-w\right)
\end{align*}
holds for every $z$, $w \in \C\setminus \R$. We use this identity with $w = -\tau$, and note that
\begin{align*}
-\gamma^{-1}w=-\frac{a_4w-a_2}{-a_3w+a_1} = -\frac{a_4(-\tau)-a_2}{-a_3(-\tau)+a_1} = \gamma^{-1}\tau.
\end{align*}
This implies the claim.
\end{proof}

We may now obtain a first representation of $f_{k,D}$ as a trace of cycle integrals.

\begin{thm} \label{thm:fkDresult}
Let $k > 2$ and $\mathscr{P}_{2k}$ be as in Definition \ref{defn:petpoincare}. Then,
\begin{align*}
f_{k,D}(\tau) = (-1)^{k} D^{\frac{1}{2} - k}k \binom{2k}{k} \sum_{Q \in \mathcal{Q}_D \slash \Gamma} \int_{\Gamma_Q \backslash S_Q} \mathscr{P}_{2k}(z,\tau) Q(z,1)^{k-1}\mathrm{d}z.
\end{align*}
\end{thm}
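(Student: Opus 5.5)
The argument is essentially complete once we combine the computation preceding Lemma \ref{lem:poincaresummation} with that lemma; what remains is bookkeeping of constants and justifying the interchange of sum and integral.

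We start from the identity derived just before Definition \ref{defn:petpoincare}:
\[
f_{k,D}(\tau) = \frac{2 (2\pi)^{2k}D^{\frac{1}{2}-k}}{\Gamma(k)^2} \sum_{Q \in \mathcal{Q}_D / \Gamma} \int_{\Gamma_Q\backslash S_Q} \Big(\sum_{m\ge1} m^{2k-1}P_{2k,m}(z)q^m\Big) Q(z,1)^{k-1}\,\mathrm{d}z .
\]
This identity rests on three ingredients: Lemma \ref{lem:PeterssonCoefficientFormula}, the reality of coefficients in Lemma \ref{lem:fkDfourier} (used to swap the arguments of the inner product), and Lemma \ref{lem:HyperbolicPeriods}.

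We first check the interchange of the $m$-sum with the cycle integral. For fixed $\tau$, the series $\sum_m m^{2k-1}P_{2k,m}(z)q^m$ converges absolutely and uniformly for $z$ in compacta. One way to see this is via Lemma \ref{lem:poincaresummation} itself: by Lipschitz, the partial sums are dominated by the absolutely convergent series defining $\mathscr{P}_{2k}$. Alternatively, $|P_{2k,m}(z)|$ grows at most polynomially in $m$ on compacta, while $|q^m| = e^{-2\pi m v}$ decays exponentially. Each $\Gamma_Q\backslash S_Q$ is a compact arc, so the exchange is legitimate.

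Next we apply Lemma \ref{lem:poincaresummation}, which says
\[
\sum_{m} m^{2k-1}P_{2k,m}(z)q^m=\frac{(2k-1)!}{(2\pi i)^{2k}}\mathscr{P}_{2k}(z,\tau).
\]
Substituting gives the constant
\[
\frac{2(2\pi)^{2k}}{\Gamma(k)^2}\cdot\frac{(2k-1)!}{(2\pi i)^{2k}}=\frac{2(-1)^k(2k-1)!}{((k-1)!)^2}.
\]
Writing $(2k-1)!=(2k)!/(2k)$ and $(k-1)!=k!/k$, this equals
\[
\frac{2(-1)^k (2k)!\,k^2}{2k\,(k!)^2}=(-1)^k k\binom{2k}{k}.
\]
Together with the factor $D^{\frac12-k}$, this is exactly the constant in the statement.

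The main point requiring care is the constant chain in the preceding display. We should recheck that $\pi\binom{2k-2}{k-1}2^{3-2k}\frac{(4\pi)^{2k-1}}{\Gamma(2k-1)}=\frac{2(2\pi)^{2k}}{\Gamma(k)^2}$. This holds because $\binom{2k-2}{k-1}/\Gamma(2k-1)=1/\Gamma(k)^2$ and $\pi\, 2^{3-2k}4^{2k-1}\pi^{2k-1}=2^{2k+1}\pi^{2k}$. We should also confirm that the normalization $D^{\frac12-\frac{2k}{4}}=D^{\frac12-\frac k2}$ in $\mathcal{C}_{2k}$, combined with $D^{-k/2}$ from Lemma \ref{lem:HyperbolicPeriods}, gives $D^{\frac12-k}$, which it does. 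Finally, the orientation conventions of $\mathcal{C}_{2k}$ carry over unchanged, since we only replace the integrand.
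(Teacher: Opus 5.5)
Your proposal is correct and matches the paper's argument: substitute Lemma \ref{lem:poincaresummation} into the display preceding Definition \ref{defn:petpoincare}, and your constant check $\frac{2(2\pi)^{2k}}{\Gamma(k)^2}\cdot\frac{(2k-1)!}{(2\pi i)^{2k}}=(-1)^k k\binom{2k}{k}$ is right. Your one addition, moving the $m$-sum inside the cycle integral, is left implicit in the paper. Of your two justifications, the bound $|P_{2k,m}(z)|\le\sum_{\gamma}|j(\gamma,z)|^{-2k}$ (uniform in $m$ on compacta) against $|q^m|=e^{-2\pi m v}$ is the valid one; the partial sums in $m$ are not literally dominated by the series defining $\mathscr{P}_{2k}$.
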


\begin{proof}
    We simply combine the previous observations and obtain our desired result.
\end{proof}

Theorem \ref{thm:fkDresult} has a nice application, which we prepare with another lemma.
\begin{lemma}[\protect{\cite{koekri}*{exercises to Section III.2}}] \label{lem:exercise}
If $k$ is not a multiple of $\vt{\Gamma_\tau}$ then $\mathscr{P}_{k}(\cdot,\tau)$ vanishes identically.
\end{lemma}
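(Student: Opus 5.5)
Since $\mathscr{P}_{k}(\cdot,\tau)$ is a sum of weight-$k$ slashes of $(z+\tau)^{-k}$ over all of $\Gamma$, the natural route is to regroup the sum along left cosets of the stabilizer of a suitable point. I will then show that the resulting inner sum over the stabilizer is a character sum that vanishes unless $k$ is divisible by $|\Gamma_\tau|$. Here $\Gamma_\tau$ is the stabilizer in $\Gamma$ of $\tau$; note that $-I\in\Gamma_\tau$ always, so $|\Gamma_\tau|\in\{2,4,6\}$. Write $\phi_\tau(z):=(z+\tau)^{-k}$. For the stabilizer argument it is convenient to use the symmetry from Lemma \ref{lem:CurlyPCuspForm}, $\mathscr{P}_k(z,\tau)=\mathscr{P}_k(\tau,z)$. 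This lets me regard $\mathscr{P}_k(\cdot,\tau)$ as $\sum_{\gamma}\phi_z|_{k,\tau}\gamma$, a weight-$k$ sum in the variable $\tau$. Alternatively, I will work directly with the $\tau$-transformation coming from the identity $j(\gamma^{-1},w)(z-\gamma^{-1}w)=j(\gamma,z)(\gamma z-w)$ proved there with $w=-\tau$. Either way, I obtain the relation
\[
\mathscr{P}_k(z,\gamma\tau)=j(\gamma,\tau)^{k}\,\mathscr{P}_k(z,\tau)\qquad(\gamma\in\Gamma),
\]
obtained by reindexing the absolutely convergent sum (valid for $k\ge4$).

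Now take $\gamma\in\Gamma_\tau$, so that $\gamma\tau=\tau$. The relation gives $\mathscr{P}_k(z,\tau)=j(\gamma,\tau)^k\mathscr{P}_k(z,\tau)$ for every $z$. Hence either $\mathscr{P}_k(\cdot,\tau)\equiv0$, or $j(\gamma,\tau)^k=1$ for all $\gamma\in\Gamma_\tau$. The map $\gamma\mapsto j(\gamma,\tau)$ restricted to $\Gamma_\tau$ is a group homomorphism into $\C^\times$: the cocycle relation $j(\gamma\delta,\tau)=j(\gamma,\delta\tau)j(\delta,\tau)$ together with $\delta\tau=\tau$ shows this. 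It is also injective. Indeed, an element fixing $\tau$ with $j(\gamma,\tau)=1$ acts on the tangent space at $\tau$ by $j(\gamma,\tau)^{-2}=1$. Since $\gamma\tau=\tau$, the second row gives $a_3\tau+a_4=1$, and comparing imaginary parts forces $a_3=0$ and $a_4=1$. Then $\gamma$ is a translation fixing $\tau$, so $\gamma=I$. Therefore $j(\cdot,\tau)$ identifies $\Gamma_\tau$ with the group of $|\Gamma_\tau|$-th roots of unity, and some $\gamma\in\Gamma_\tau$ has $j(\gamma,\tau)$ a primitive $|\Gamma_\tau|$-th root of unity. If $|\Gamma_\tau|\nmid k$, then $j(\gamma,\tau)^k\neq1$ for this $\gamma$, and the function must vanish identically.

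The main obstacle is justifying the transformation law in $\tau$ with the correct power of $j(\gamma,\tau)$. It requires care with the sign conventions in $\phi_\tau(z)=(z+\tau)^{-k}$ versus $(z-\tau)^{-k}$, and with using $\gamma$ versus $\gamma^{-1}$. I will settle this by applying the displayed identity with $w=-\tau$ exactly as in the proof of Lemma \ref{lem:CurlyPCuspForm}, and reindexing $\gamma\mapsto\gamma\delta$ in the absolutely convergent sum. Any leftover sign from $(-1)^k$ is harmless: $k$ is even whenever the series is nonzero, and for odd $k$ the vanishing already follows with $\gamma=-I$, because $j(-I,\tau)^k=(-1)^k=-1$.
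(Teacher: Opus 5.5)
Your proof is correct. It rests on the same mechanism as the paper's: after moving the action to the $\tau$-variable via $\mathscr{P}_k(z,\tau)=\mathscr{P}_k(\tau,z)$, everything is controlled by the character $\gamma\mapsto j(\gamma,\tau)^{k}$ of $\Gamma_\tau$. The execution differs, though.

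\textbf{The paper's route.} It does what your first sentence announces. It splits the sum along $\Gamma/\Gamma_\tau$ and pulls out the factor $\sum_{\gamma\in\Gamma_\tau}j(\gamma,\tau)^{-k}$. It then disposes of odd $k$ separately and evaluates this stabilizer sum by hand at $\tau=\rho$.

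\textbf{Your route.} You use the eigen-relation $\mathscr{P}_k(z,\tau)=j(\gamma,\tau)^{k}\mathscr{P}_k(z,\tau)$ for $\gamma\in\Gamma_\tau$, together with injectivity of $j(\cdot,\tau)$ on $\Gamma_\tau$, so that $\Gamma_\tau$ is identified with the $|\Gamma_\tau|$-th roots of unity. This is uniform in $\tau$, needs no explicit list of stabilizer elements and their cocycle values, and handles odd $k$ automatically through $-I$.

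\textbf{Conventions.} The paper counts stabilizers projectively ($|\Gamma_\rho|=3$, as used in Corollary \ref{cor:fkDzeros}), whereas you count in $\mathrm{SL}_2(\Z)$ with $-I$ included. Since the projective order is half of yours, your version implies the paper's and is strictly stronger. For example, it also gives $\mathscr{P}_k(\cdot,i)\equiv 0$ for $k\equiv 2 \pmod{4}$, via $j(S,i)^k=i^k=-1$ with $S=\left(\begin{smallmatrix}0&-1\\1&0\end{smallmatrix}\right)$. This is consistent with every modular form of such weight vanishing at $i$.

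\textbf{Transformation law.} Your first route, $\mathscr{P}_k(z,\gamma\tau)=\mathscr{P}_k(\gamma\tau,z)=j(\gamma,\tau)^k\mathscr{P}_k(\tau,z)$, is the clean one: it needs only the symmetry and modularity in the first variable. If you instead redo the direct computation ``exactly as in'' Lemma \ref{lem:CurlyPCuspForm}, do not copy the displayed step literally. In fact $-\gamma^{-1}(-\tau)=\frac{a_4\tau+a_2}{a_3\tau+a_1}=\tilde\gamma\tau$ with $\tilde\gamma=\left(\begin{smallmatrix}a_4&a_2\\a_3&a_1\end{smallmatrix}\right)$, not $\gamma^{-1}\tau$. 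The correct identity is $j(\gamma,z)(\gamma z+\tau)=j(\tilde\gamma,\tau)(z+\tilde\gamma\tau)$. Since $\gamma\mapsto\tilde\gamma$ is a bijection of $\Gamma$, the symmetry, and hence your argument, is unaffected.
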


\begin{proof}
Let $\tau \in \H$. By Definition \ref{defn:petpoincare} and Lemma \ref{lem:CurlyPCuspForm}, we obtain
\begin{align*}
\mathscr{P}_{k}(z,\tau) &= \sum_{\gamma \in \Gamma} \frac{1}{\left(z+\tau\right)^k}\Big\vert_{k,z}\gamma  = \left(\sum_{\gamma_1 \in \Gamma\slash \Gamma_{\tau}}\frac{1}{\left(z+\tau\right)^k}\Big\vert_{k,\tau}\gamma_1\right)\left(\sum_{\gamma_2 \in \Gamma_{\tau}} 1\Big\vert_{k,\tau}\gamma_2\right).
\end{align*}
If $k$ is odd then modularity implies that $\mathscr{P}_{k}$ vanishes identically. The only cases where an even $k$ may fail to be a multiple of $|\Gamma_\tau|$ are $\tau \in \Gamma\rho$ where $\rho := \exp \left(\frac{2\pi i}{3}\right)$ and due to the modularity property of $\mathscr{P}_k(z,\tau)$ and $|\Gamma_\rho| = 3$, the claim follows if $\mathscr{P}_k(z,\rho) = 0$ for $3 \nmid k$. We simply evaluate the right factor of the above equation and retrieve:
\begin{align*}
    \sum_{\gamma \in \Gamma_\rho} 1 \Big\vert_k \gamma &= 1^{-k} + (\rho - 1)^{-k} + \rho^{-k} = 1^k + (\overline{\rho} - 1)^k + \overline{\rho}^k
\end{align*}
which vanishes if $2 \mid k$ but $3 \nmid k$.    
\end{proof}

This established, we obtain a new proof for the following result.
\begin{cor} \label{cor:fkDzeros}
If $w$ is $\Gamma$-equivalent to $e^{\frac{\pi i}{3}}$ (or to $e^{\frac{2\pi i}{3}}$) and if $k$ is not a multiple of $6$ then $f_{k,D}(w) = 0$.
\end{cor}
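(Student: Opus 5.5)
The plan is to read off the vanishing directly from Theorem \ref{thm:fkDresult} combined with Lemma \ref{lem:exercise}. By Theorem \ref{thm:fkDresult}, for every $\tau \in \H$,
\[
f_{k,D}(\tau) = (-1)^{k} D^{\frac{1}{2} - k}k \binom{2k}{k} \sum_{Q \in \mathcal{Q}_D \slash \Gamma} \int_{\Gamma_Q \backslash S_Q} \mathscr{P}_{2k}(z,\tau) Q(z,1)^{k-1}\mathrm{d}z,
\]
so it suffices to show that the integrand $z \mapsto \mathscr{P}_{2k}(z,w)$ vanishes identically when $w$ is $\Gamma$-equivalent to $e^{\frac{\pi i}{3}}$ (or to $e^{\frac{2\pi i}{3}}$) and $6 \nmid k$. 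The weight of $\mathscr{P}_{2k}$ is $2k$, and the hypothesis $6 \nmid k$ will be translated into $3 \nmid 2k$.

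First, I reduce to a single representative of the orbit. Note that $e^{\frac{\pi i}{3}} = \rho + 1$, so both points lie in the orbit $\Gamma\rho$. Since $f_{k,D}$ is modular of weight $2k$, we have $f_{k,D}(\gamma w) = j(\gamma,w)^{2k} f_{k,D}(w)$, so vanishing at one point of the orbit gives vanishing on the whole orbit. Alternatively, one can use the modularity of $\mathscr{P}_{2k}(z,\tau)$ in $\tau$, which holds by the symmetry in Lemma \ref{lem:CurlyPCuspForm}. Either way, it is enough to treat $w = \rho$.

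Second, I check the stabilizer condition. We have $|\Gamma_\rho| = 3$ (as counted in the proof of Lemma \ref{lem:exercise}, working modulo $\pm 1$). The condition ``$2k$ is not a multiple of $|\Gamma_\rho|$'' is therefore $3 \nmid 2k$, which is equivalent to $3 \nmid k$. Since $k$ is even in the setting of Theorem \ref{thm:fkDresult} (the sum defining $f_{k,D}$ needs $k$ even to be nontrivial in the paper's convention), $6 \nmid k$ is equivalent to $3 \nmid k$. Lemma \ref{lem:exercise} then gives $\mathscr{P}_{2k}(\cdot,\rho) \equiv 0$. The integrals over each $\Gamma_Q\backslash S_Q$ vanish, and hence $f_{k,D}(\rho) = 0$.

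The main obstacle is only bookkeeping. One must make sure the weight passed to Lemma \ref{lem:exercise} is $2k$ rather than $k$, so that the condition $3 \nmid 2k$ correctly matches the hypothesis $6\nmid k$. One must also justify that Theorem \ref{thm:fkDresult} holds pointwise at $w = \rho$. This is fine, because both sides are holomorphic in $\tau$ on all of $\H$: the integrand is a cusp form in $z$ that depends holomorphically on $\tau$, and the integration is over compact cycles.
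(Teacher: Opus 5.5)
Your proposal is correct and follows the paper's proof: you combine the trace formula of Theorem \ref{thm:fkDresult} with Lemma \ref{lem:exercise} at weight $2k$, where the stabilizer has order $3$, so that $3 \nmid 2k \iff 3 \nmid k$, which for even $k$ is exactly $6 \nmid k$. Your initial reduction to $w = \rho$ via modularity is only a cosmetic difference: the paper applies Lemma \ref{lem:exercise} at $w$ directly, and that lemma's own proof performs the same reduction to $\rho$.
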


\begin{proof}
Recall that $k$ is assumed to be even, otherwise $f_{k,D}$ vanishes identically and the claim follows trivially. Moreover, the stabilizer of the point $e^{\frac{\pi i}{3}}$ has order $3$ (see \cite{koblitz}*{p.\ $102$} for example). Hence if $k$ is not divisible by $2$ and $3$ and $w$ is $\Gamma$-equivalent to $e^{\frac{\pi i}{3}}$ then $\mathscr{P}_{2k}(\cdot,w)$ vanishes identically by Lemma \ref{lem:exercise}, and the claim follows by Theorem \ref{thm:fkDresult}.
\end{proof}

\begin{rmk}
This might have implications on the divisor modular form of $f_{k,D}$, see \cite{bklor} for more details on this concept.
\end{rmk}

\section{Applying the work by Alfes, Bringmann, Löbrich, Mono, and Schwagenscheidt}

Recall the functions $H_{k_1,k_2}$ from Definition \ref{def:GeneralPeterssonPoincare} in the introduction. With respect to $\tau$, these series behave delightfully under $R_{-2k_2}$ and $L_{-2k_2}$ in the sense of

\begin{lemma}[\protect{\cite{lsmeromorphic}*{Lemma 3.2}}] \label{lem:PeterssonPoincareRaisingLowering}
    For $k_1 \in \N_{\geq 2}$ and $k_2 \in \Z$, it holds that
    \begin{align*}
        R_{-2k_2}(H_{k_1,k_2}(z,\tau)) &= (k_1 - k_2)H_{k_1,k_2-1}(z,\tau),\\
        L_{-2k_2}(H_{k_1,k_2}(z,\tau)) &= (k_1 + k_2)H_{k_1,k_2+1}(z,\tau).
    \end{align*}
\end{lemma}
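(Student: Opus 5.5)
The plan is to reduce both identities to a computation on the seed function and then use that the Maa{\ss} operators commute with the slash action in $\tau$. Put $a := k_1 + k_2$ and
\[h_{k_1,k_2}(z,\tau) := \frac{v^{a}}{(z - \tau)^{k_1-k_2}(z - \overline{\tau})^{a}},\]
so that $H_{k_1,k_2}(z,\tau) = \sum_{\gamma \in \Gamma} h_{k_1,k_2}(z,\cdot)\big|_{-2k_2,\tau}\gamma$ by the second expression in Definition \ref{def:GeneralPeterssonPoincare}.

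First I justify termwise differentiation. Since $k_1 \geq 2$, the series defining $H_{k_1,k_2}$ converges absolutely and locally uniformly in $\tau$, away from the points where $z - \gamma\tau$ or $z - \gamma\overline{\tau}$ vanishes. The same holds for the series of first derivatives in $\tau$ and $\overline{\tau}$, since these have the same shape with the exponents shifted by one. Hence $R_{-2k_2}$ and $L_{-2k_2}$ may be applied summand by summand. By the Remark following Definition \ref{defn:RaisingLoweringOperator}, $R_{-2k_2}\big(h\big|_{-2k_2}\gamma\big) = \big(R_{-2k_2}h\big)\big|_{-2k_2+2}\gamma$, and similarly for $L$ with weight $-2k_2-2$. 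Note that $-2k_2+2 = -2(k_2-1)$ and $-2k_2-2 = -2(k_2+1)$, which are exactly the weights of $H_{k_1,k_2-1}$ and $H_{k_1,k_2+1}$. It therefore suffices to prove
\[R_{-2k_2}h_{k_1,k_2} = (k_1-k_2)\,h_{k_1,k_2-1} \quad \text{and} \quad L_{-2k_2}h_{k_1,k_2} = (k_1+k_2)\,h_{k_1,k_2+1}.\]

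Both seed identities follow from a direct calculation, using $\partial_\tau v = \frac{1}{2i}$ and $\partial_{\overline{\tau}} v = -\frac{1}{2i}$.

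\emph{Raising.} The factor $(z - \overline{\tau})^{-a}$ is annihilated by $\partial_\tau$. Differentiating the other two factors gives
\[2i\partial_\tau h_{k_1,k_2} = a\,\frac{h_{k_1,k_2}}{v} + 2i(k_1-k_2)\,\frac{h_{k_1,k_2}}{z-\tau}.\]
Adding $-2k_2 h_{k_1,k_2}/v$ turns the coefficient $a$ into $k_1 - k_2$. Since $(z-\tau) + 2iv = z - \overline{\tau}$, we obtain
\[R_{-2k_2}h_{k_1,k_2} = (k_1-k_2)\,\frac{v^{a-1}}{(z-\tau)^{k_1-k_2+1}(z-\overline{\tau})^{a}} = (k_1-k_2)\,h_{k_1,k_2-1}.\]

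\emph{Lowering.} Now $(z-\tau)^{k_2-k_1}$ is annihilated by $\partial_{\overline{\tau}}$. Applying $-2iv^2\partial_{\overline{\tau}}$ gives
\[L_{-2k_2}h_{k_1,k_2} = a\,v^{a+1}(z-\tau)^{k_2-k_1}(z-\overline{\tau})^{-a-1}\big[(z-\overline{\tau}) - 2iv\big].\]
Since $(z-\overline{\tau}) - 2iv = z - \tau$, this equals $(k_1+k_2)\,h_{k_1,k_2+1}$.

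The only genuinely delicate point is the interchange of the operators with the infinite sum over $\Gamma$. The algebra on the seed is routine once one spots the two identities $(z-\tau) + 2iv = z-\overline{\tau}$ and $(z-\overline{\tau}) - 2iv = z-\tau$. For the interchange I would dominate the differentiated series by the absolutely convergent majorant $\sum_{\gamma \in \Gamma}|z - \gamma\tau|^{-k_1}|z - \gamma\overline{\tau}|^{-k_1}$, weighted by $\operatorname{Im}(\gamma\tau)^{k_1}$ and locally uniform in $\tau$. This is the same majorant that yields the convergence of $H_{k_1,k_2}$ itself for $k_1 \geq 2$.
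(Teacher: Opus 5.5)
Your proof is correct. The paper gives no argument of its own here (it only cites Lemma 3.2 of L\"obrich--Schwagenscheidt), and your route is the standard way to establish it: a direct computation on the seed $h_{k_1,k_2}$, combined with $R_k(f|_k\gamma)=(R_kf)|_{k+2}\gamma$, $L_k(f|_k\gamma)=(L_kf)|_{k-2}\gamma$, and locally uniform convergence of the differentiated series for $k_1\geq 2$.

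The only slip is in your raising display. After cancelling one factor via $(z-\tau)+2iv=z-\overline{\tau}$, the exponent of $(z-\overline{\tau})$ must be $a-1$ rather than $a$, and this is what makes the expression equal to $h_{k_1,k_2-1}$.
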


Using the well-known identities
\[-\Delta_r = \xi_{2-r}\xi_r \qquad \text{and} \qquad \xi_r = -v^{r-2}\overline{L_r}, \qquad r \in \Z\]
we obtain that
\begin{align*}
    \Delta_{-2k_2}H_{k_1,k_2}(z,\tau) = (k_1 + k_2)(k_1 - k_2 - 1)H_{k_1,k_2}(z,\tau).
\end{align*}
In particular, $H_{k_1,k_2}$ is harmonic in $\tau$ if and only if $k_1 = - k_2$ or $k_1 = k_2 + 1$ which are (up to sign changes) the cases of $f_{k,D}$ and $\mathcal{F}_{1-k,D}$ because we have
\[H_{k,-k}(-z,\tau) = \mathscr{P}_{2k}(z,\tau) \qquad \text{and} \qquad H_{k,k-1}(z,\tau) = \mathbb{P}_{2k}(z,\tau).\]
In these two cases, iteratively invoking Lemma \ref{lem:PeterssonPoincareRaisingLowering} yields simple relations, the first of which was already mentioned by Löbrich and Schwagenscheidt in \cite{lsmeromorphic}.

\begin{cor}\label{cor:IterativelyApplyingRandLtoHk_1k_2}
For $k, m \in \N_{\geq 1}$, it holds that
\begin{align}
    \label{eq:CorIARaLH_1}
    R_{-2k}^mH_{k+1,k}(z,\tau) &= \Gamma(m + 1)H_{k+1,k-m}(z,\tau);\\
    \label{eq:CorIARaLH_2}
    R_{2k}^mH_{k,-k}(z,\tau) &= \frac{\Gamma(2k + m)}{\Gamma(2k)}H_{k,-k-m}(z,\tau)
\end{align}
as well as
\begin{align*}
    L_{-2k}^mH_{k+1,k}(z,\tau) &= \frac{\Gamma(2k + m + 1)}{\Gamma(2k + 1)}H_{k+1,k+m}(z,\tau);\\
    L_{2k}^mH_{k,-k}(z,\tau) &= 0.
\end{align*}
\end{cor}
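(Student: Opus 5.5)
The plan is induction on $m$ for each of the four identities, using only Lemma \ref{lem:PeterssonPoincareRaisingLowering}. The weight bookkeeping is simple. $H_{k_1,k_2}(z,\cdot)$ has weight $-2k_2$ in $\tau$. Raising sends weight $-2k_2$ to $-2k_2+2 = -2(k_2-1)$, which matches $H_{k_1,k_2-1}$. Lowering sends weight $-2k_2$ to $-2(k_2+1)$, which matches $H_{k_1,k_2+1}$. Hence the iterated operators $R^m$ and $L^m$, with the weight adjusted at each step as in the definition of $R^{k-1}_{2-k}$, can be applied factor by factor.

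For \eqref{eq:CorIARaLH_1}, take $k_1=k+1$. Suppose after $j$ steps we have $R^j_{-2k}H_{k+1,k} = c_j H_{k+1,k-j}$. Lemma \ref{lem:PeterssonPoincareRaisingLowering} with $k_2 = k-j$ gives the next factor $(k_1-k_2) = (k+1-(k-j)) = j+1$. So $c_m = \prod_{j=0}^{m-1}(j+1) = m! = \Gamma(m+1)$.

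For \eqref{eq:CorIARaLH_2}, take $k_1=k$ and $k_2 = -k-j$. The factor is $k-(-k-j) = 2k+j$. The product $\prod_{j=0}^{m-1}(2k+j)$ equals $\Gamma(2k+m)/\Gamma(2k)$.

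For the first lowering identity, take $k_1=k+1$ and $k_2 = k+j$. The factor is $k_1+k_2 = 2k+1+j$. The product over $j=0,\dots,m-1$ is $\Gamma(2k+m+1)/\Gamma(2k+1)$.

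For the last identity, the first step alone already has factor $k_1+k_2 = k+(-k) = 0$. So $L_{2k}H_{k,-k}=0$, and every further lowering of $0$ is $0$.

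The only point needing care is that Lemma \ref{lem:PeterssonPoincareRaisingLowering} requires $k_1\ge 2$ and $k_2\in\Z$ at every intermediate index. This holds here because $k_1$ never changes ($k+1\ge 2$, and $k\ge2$ in the $H_{k,-k}$ cases where the series is assumed to converge) and $k_2$ stays an integer. The lemma's $k_2$ is unrestricted in sign, so there is no real obstacle beyond tracking the indices.
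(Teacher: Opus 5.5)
Your proposal is correct and matches the paper's route: the paper obtains the corollary simply by iterating Lemma~\ref{lem:PeterssonPoincareRaisingLowering}, collecting the factors $k_1-k_2$ (raising) and $k_1+k_2$ (lowering) at each step, exactly as you do. Your observation that the two $H_{k,-k}$ identities need $k\geq 2$, so that $H_{k,\cdot}$ is defined at all, is a fair tightening of the stated range $k\in\N_{\geq 1}$.
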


We now exploit a theorem by Alfes and Schwagenscheidt that generalizes a previous result by Bringmann, Guerzhoy, and Kane \cites{BGK14,BGK15}.

\begin{thm}[\protect{\cite{ANS}*{Theorem 1.1}}] \label{thm:ANS}
    Let $F: \H \to \C$ be a smooth function which transforms like a modular form of weight $2 - 2\kappa, \kappa \in \Z,$ for $\Gamma$. Then the identity
    \[\mathcal{C}_{-2\kappa}(L_{2-2\kappa}F,Q) = \mathcal{C}_{4-2\kappa}(R_{2-2\kappa}F,Q) = \overline{\mathcal{C}_{2\kappa}(\xi_{2-2\kappa}F,Q)}\]
    holds.

    Furthermore, if $F$ is a weak Maa{\ss} form of weight $2 - 2\kappa$ with eigenvalue $\lambda$, we have
    \begin{align}\label{eq:ANLRaising}
        \mathcal{C}_{2-2\ell}(R_{2-2\kappa}^{\kappa-\ell}F,Q) &= \left((\kappa + \ell)(\kappa - \ell - 1) - \lambda\right) \mathcal{C}_{-2-2\ell}(R_{2-2\kappa}^{\kappa-\ell-2}F,Q), &\ell \leq \kappa - 2;\\
        \label{eq:ANLLowering}
        \mathcal{C}_{2\ell-2}(L_{2-2\kappa}^{2-\kappa-\ell}F,Q) &= \left((\kappa + \ell)(\kappa - \ell - 1) - \lambda\right) \mathcal{C}_{2+2\ell}(L_{2-2\kappa}^{-\kappa-\ell}F,Q), &\ell \leq -\kappa.
    \end{align}
    Here, we adjust the weight while iterating either operator as indicated in Section 2.
\end{thm}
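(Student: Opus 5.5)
The statement "immediately above" is Theorem \ref{thm:ANS}. It is quoted from \cite{ANS}, so I only sketch how its proof would go.

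\emph{First chain of equalities.} Let $C = \Gamma_Q\backslash S_Q$ with the prescribed orientation. Take $z_0 = w$ and $z_1 = \gamma_Q w$, where $\gamma_Q$ generates $\Gamma_Q$ modulo $\pm 1$. Parametrize $S_Q$ by its arc. The key observation is that $Q(z,1)\,\mathrm{d}z$ is real up to a fixed sign along $S_Q$: writing $Q(z,1)=a(z-w_+)(z-w_-)$, one checks that $\frac{Q(z,1)}{\sqrt{D}\,y}\,\mathrm{d}z=\pm\frac{|\mathrm{d}z|}{y}\cdot(\text{unit})$ with the appropriate phase.

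With this I would write $L_{2-2\kappa}F = -2iy^2\partial_{\bar z}F$ and $R_{2-2\kappa}F = 2i\partial_z F + \frac{(2-2\kappa)F}{y}$. The weight $-2\kappa$ integrand $L F\cdot Q^{-\kappa-1}\,\mathrm{d}z$ and the weight $4-2\kappa$ integrand $RF\cdot Q^{1-\kappa}\,\mathrm{d}z$ should then differ by the exact differential
\[
\mathrm{d}\bigl(F(z)\,Q(z,1)^{-\kappa}\,\cdot(\text{power of }y\text{ and }\overline{Q})\bigr)
\]
restricted to $S_Q$. The quantity in parentheses is $\Gamma_Q$-invariant, so its integral over the closed cycle vanishes. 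Concretely, along $S_Q$ the conjugate $\overline{Q(z,1)}$ equals $Q(z,1)$ times a factor expressible through $y^2$ and $\mathrm{d}z/\mathrm{d}\bar z$. This lets me convert $\partial_{\bar z}$ derivatives into $\partial_z$ derivatives, up to a total derivative.

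The equality with $\overline{\mathcal{C}_{2\kappa}(\xi F,Q)}$ comes from $\xi_{2-2\kappa}F=-y^{-2\kappa}\overline{L F}$. One conjugates the integral, using that $\overline{Q(z,1)^{\kappa-1}\mathrm{d}z}$ along $S_Q$ equals $Q(z,1)^{\kappa-1}\mathrm{d}z$ times an explicit power of $y^2/Q$. The normalizations $D^{1/2-k/4}$ are tuned exactly so that the constants match.

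\emph{Iterated identities.} For \eqref{eq:ANLRaising} I apply the first identity with $F$ replaced by $G = R^{\kappa-\ell-2}F$, of weight $2-2\kappa'$ with $\kappa'=\ell+2$. This gives
\[
\mathcal{C}_{4-2\kappa'}(R G,Q)=\mathcal{C}_{-2\kappa'}(L G,Q).
\]
Applying it once more to $RG$ gives
\[
\mathcal{C}_{2-2\ell}(R^2G,Q)=\mathcal{C}_{-2-2\ell}(LRG,Q).
\]
Then $L_{k+2}R_k = -\Delta_k - k$ together with the commutation relation $\Delta_{k+2}R_k=R_k(\Delta_k+k)$ gives $LRG=\mu G$. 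Here $\mu$ is computed from $\lambda$ by iterating that relation $\kappa-\ell-2$ times; the resulting telescoping sum yields $(\kappa+\ell)(\kappa-\ell-1)-\lambda$ up to sign conventions. The lowering identity \eqref{eq:ANLLowering} is handled symmetrically with $R_{k-2}L_k=-\Delta_k$.

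The main obstacle is the first step: exhibiting the exact differential and tracking orientation and sign conventions. The eigenvalue bookkeeping afterwards is routine.
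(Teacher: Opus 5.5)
Your overall plan is right. The paper itself gives no proof: it quotes \cite{ANS} and remarks only that the argument there rewrites cycle integrals as integrals over a real interval. Showing that the two integrands differ along $S_Q$ by the differential of a $\Gamma_Q$-invariant function is the same mechanism in the original coordinates. Your derivation of the iterated identities is also sound. Only the application of the first identity to $RG$ is needed. Moreover, $\Delta_k=-R_{k-2}L_k=-L_{k+2}R_k-k$ holds exactly for the paper's $\Delta_k$, so the bookkeeping gives precisely $(\kappa+\ell)(\kappa-\ell-1)-\lambda$, with no sign ambiguity.

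The gap is the first identity, which carries all the content and which you leave open. The primitive is an unspecified expression, and the cancellation is never checked. The fact you call the key observation is false: $Q(z,1)\,\mathrm{d}z$ is not real on $S_Q$. What is real is $\mathrm{d}z/Q(z,1)$; indeed $\sqrt{D}\,\mathrm{d}z/Q(z,1)=\pm|\mathrm{d}z|/y$.

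Here is what is missing. Write $Q=[a,b,c]$ and $z=x+iy$. On $S_Q$ one has $a|z|^2+bx+c=0$, hence
\[
Q(z,1)=iy\,(2az+b)=iy\,\partial_z Q(z,1),\qquad \overline{Q(z,1)}=\frac{Dy^2}{Q(z,1)},\qquad \mathrm{d}\bar z=\frac{Dy^2}{Q(z,1)^2}\,\mathrm{d}z .
\]
The primitive is simply $F(z)Q(z,1)^{1-\kappa}$, with no powers of $y$ or $\overline{Q}$. It is $\Gamma_Q$-invariant because the weights $2-2\kappa$ and $-2(1-\kappa)$ cancel. Insert $\partial_zF=\frac{1}{2i}\bigl(R_{2-2\kappa}F-\frac{(2-2\kappa)F}{y}\bigr)$ and $\partial_{\bar z}F=-\frac{L_{2-2\kappa}F}{2iy^2}$. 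Then along $S_Q$
\[
\mathrm{d}\bigl(FQ^{1-\kappa}\bigr)=\frac{1}{2i}\Bigl(R_{2-2\kappa}F\,Q^{1-\kappa}-D\,L_{2-2\kappa}F\,Q^{-\kappa-1}\Bigr)\mathrm{d}z+(1-\kappa)FQ^{-\kappa}\Bigl(\partial_zQ-\frac{Q}{iy}\Bigr)\mathrm{d}z .
\]
The last term vanishes by the first relation above. This is exactly where the $\frac{(2-2\kappa)F}{y}$ part of the raising operator is absorbed.

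Integrating from $w$ to $\gamma_Q w$ gives $\int R_{2-2\kappa}F\,Q^{1-\kappa}\,\mathrm{d}z=D\int L_{2-2\kappa}F\,Q^{-\kappa-1}\,\mathrm{d}z$. After inserting the factors $D^{\frac12-\frac k4}$, this is the first equality. Orientation plays no role, since both sides use the same oriented path.

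For the $\xi$-part, the definitions of Section 2 give $\xi_{2-2\kappa}F=+y^{-2\kappa}\overline{L_{2-2\kappa}F}$. The minus sign in the relation $\xi_k=-v^{k-2}\overline{L_k}$ stated there, which you used, would flip the sign of the last term of the chain. With the correct sign, the relations above give
\[
\overline{\xi_{2-2\kappa}F\,Q^{\kappa-1}\,\mathrm{d}z}=D^{\kappa}\,L_{2-2\kappa}F\,Q^{-\kappa-1}\,\mathrm{d}z \quad \text{on } S_Q,
\]
that is, $\overline{\mathcal{C}_{2\kappa}(\xi_{2-2\kappa}F,Q)}=\mathcal{C}_{-2\kappa}(L_{2-2\kappa}F,Q)$.
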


This theorem is proven using representations of cycle integrals as integrals over an interval on the real line and thus may be applied in our case since we explicitly exclude the cases where $z \in E_D$ and hence our path of integration does not cross any poles. Combining Corollary \ref{cor:IterativelyApplyingRandLtoHk_1k_2} and Theorem \ref{thm:ANS}, we obtain

\begin{lemma}\label{lem:FactorsForMainTheorem}
    Let $m \in 2\N$ and $k \in \N_{\geq 1}$. Then
    \begin{equation}\label{eq:FactorsSmallG}
        \mathcal{C}_{-2(k - m)}\left(H_{k+1,k-m}(z, \, \cdot \,), Q\right) = \left(\prod_{l = 0}^{\frac{m}{2} - 1}\frac{(k - l)}{(l + 1)}\right) \cdot \mathcal{C}_{-2k}\left(H_{k+1,k}(z, \, \cdot \,), Q\right)
    \end{equation}
    and
    \begin{equation}\label{eq:FactorsSmallF}
        \mathcal{C}_{2(k+m)}\left(H_{k,-k-m}(z, \, \cdot \,), Q\right) = (-1)^{\frac{m}{2}}\left(\prod_{l = 0}^{\frac{m}{2}-1}\frac{2l + 1}{2(k + l) + 1}\right)\cdot \mathcal{C}_{2k}\left(H_{k,-k}(z, \, \cdot \,), Q\right)
    \end{equation}
\end{lemma}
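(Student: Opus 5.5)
The plan is to prove both identities by telescoping. Corollary \ref{cor:IterativelyApplyingRandLtoHk_1k_2} writes each $H_{k_1,k_2}(z,\cdot)$ in the statement as an iterated raising of a \emph{harmonic} seed. The recursion \eqref{eq:ANLRaising} of Theorem \ref{thm:ANS} then relates cycle integrals two raising steps apart. In both cases the seed has Laplace eigenvalue $\lambda=0$, since $\Delta_{-2k_2}H_{k_1,k_2}=(k_1+k_2)(k_1-k_2-1)H_{k_1,k_2}$ vanishes for $(k_1,k_2)=(k+1,k)$ and for $(k_1,k_2)=(k,-k)$. All operators and cycle integrals act in the second variable of $H$, with $z$ a fixed parameter.

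For \eqref{eq:FactorsSmallG}, take $F:=H_{k+1,k}(z,\cdot)$, which has weight $-2k=2-2\kappa$ with $\kappa=k+1$.
\begin{itemize}
\item For $j\geq 2$ put $\ell:=\kappa-j=k+1-j$, so that $\ell\le\kappa-2$ and the weight $2-2\ell$ equals $-2k+2j$.
\item Equation \eqref{eq:ANLRaising} then reads
\[\mathcal{C}_{-2k+2j}(R^{j}F,Q)=(2k+2-j)(j-1)\,\mathcal{C}_{-2k+2j-4}(R^{j-2}F,Q).\]
\item By \eqref{eq:CorIARaLH_1}, $R^jF=\Gamma(j+1)H_{k+1,k-j}$, so both sides can be written in terms of $H$. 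This gives
\[\mathcal{C}(H_{k+1,k-j})=\frac{(2k+2-j)(j-1)\Gamma(j-1)}{\Gamma(j+1)}\,\mathcal{C}(H_{k+1,k-j+2})=\frac{2k+2-j}{j}\,\mathcal{C}(H_{k+1,k-j+2}),\]
where each cycle integral carries its appropriate weight.
\item Substituting $j=2l+2$, the factor becomes $\frac{k-l}{l+1}$. Iterating for $l=0,\dots,\frac m2-1$ telescopes down to $\mathcal{C}_{-2k}(H_{k+1,k},Q)$, which is the claim.
\end{itemize}

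For \eqref{eq:FactorsSmallF}, take $F:=H_{k,-k}(z,\cdot)$, of weight $2k=2-2\kappa$ with $\kappa=1-k$.
\begin{itemize}
\item Put $\ell=\kappa-j=1-k-j$ for $j\ge2$. Then \eqref{eq:ANLRaising} has factor $(\kappa+\ell)(\kappa-\ell-1)=(2-2k-j)(j-1)$.
\item By \eqref{eq:CorIARaLH_2}, $R^jF=\frac{\Gamma(2k+j)}{\Gamma(2k)}H_{k,-k-j}$. Hence
\[\mathcal{C}(H_{k,-k-j})=\frac{-(2k+j-2)(j-1)\Gamma(2k+j-2)}{\Gamma(2k+j)}\,\mathcal{C}(H_{k,-k-j+2})=-\frac{j-1}{2k+j-1}\,\mathcal{C}(H_{k,-k-j+2}).\]
\item Substituting $j=2l+2$, the factor becomes $-\frac{2l+1}{2k+2l+1}$. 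Telescoping over $l=0,\dots,\frac m2-1$ yields the product together with the sign $(-1)^{m/2}$.
\end{itemize}

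The main obstacle is not the algebra but the legitimacy of applying Theorem \ref{thm:ANS} to $F$. The function $H(z,\cdot)$ is not smooth on all of $\H$: it has poles at the $\Gamma$-orbit of $z$ (and of $\bar z$, respectively $-z$). I would handle this as indicated before the lemma. The proof of Theorem \ref{thm:ANS} only uses a representation of $\mathcal{C}$ as an integral over a real interval along $S_Q$ and integration by parts there, so it stays valid provided no pole lies on $\Gamma S_Q$, i.e.\ $z\notin E_D$. One must also make sure the iterated raising operators are not applied across a singularity. This holds because Lemma \ref{lem:PeterssonPoincareRaisingLowering} is an identity of meromorphic or real-analytic functions away from the poles. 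A final bookkeeping check is that the weights $2-2\ell$ in \eqref{eq:ANLRaising} match the subscripts $-2(k-m)$ and $2(k+m)$ in the statement, as computed above.
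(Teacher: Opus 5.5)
Your proposal is correct and follows the paper's proof: you write each $H$ as an iterated raising of a harmonic seed via Corollary \ref{cor:IterativelyApplyingRandLtoHk_1k_2}, apply \eqref{eq:ANLRaising} with $\lambda=0$ (your $j$ is the paper's $m+2$), and telescope, and your per-step factors $\frac{2k-m}{m+2}$ and $-\frac{m+1}{2k+m+1}$ agree exactly with the paper's. Your justification for applying Theorem \ref{thm:ANS} away from the poles is the same one the paper gives just before the lemma.
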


\begin{proof}
    We first prove \eqref{eq:FactorsSmallG} by applying \eqref{eq:CorIARaLH_1} and \eqref{eq:ANLRaising} with $\kappa = k + 1$, $\ell = k - m - 1$ to $F = H_{k+1,k}(z, \, \cdot \,)$ so that
    \begin{align*}
        \mathcal{C}_{-2(k-m-2)}&\left(H_{k+1,k-m-2}(z, \, \cdot \,), Q\right)\\
        &= \frac{1}{\Gamma(m + 3)}\mathcal{C}_{2-2(k-m-1)}(R_{-2k}^{m+2}H_{k+1,k}(z, \, \cdot \,), Q)\\
        &= \frac{(2k-m)(m+1)}{\Gamma(m + 3)}\mathcal{C}_{-2-2(k-m-1)}(R_{-2k}^{m}H_{k+1,k}(z, \, \cdot \,), Q)\\
        &= \frac{2k - m}{m + 2}\mathcal{C}_{-2(k-m)}\left(H_{k+1,k-m}(z, \, \cdot \,),Q\right)
    \end{align*}
    which yields the proof inductively.

    Likewise, we apply \eqref{eq:CorIARaLH_2} and \eqref{eq:ANLRaising} with $\kappa = 1 - k$, $\ell = - k - m - 1$ to $F = H_{k,-k}(z, \, \cdot \,)$ so that
    \begin{align*}
        \mathcal{C}_{2(k+m+2)}&\left(H_{k,-k-m-2}(z, \, \cdot \,), Q\right)\\
        &= \frac{\Gamma(2k)}{\Gamma(2k + m + 2)}\mathcal{C}_{2-2(-k-m-1)}\left(R_{-2k}^{m+2}H_{k,-k}(z, \, \cdot \,), Q\right)\\
        &= \frac{\Gamma(2k)(-2k-m)(m+1)}{\Gamma(2k + m + 2)}\mathcal{C}_{-2-2(-k-m-1)}\left(R_{-2k}^{m}H_{k,-k}(z, \, \cdot \,), Q\right)\\
        &= -\frac{(m+1)}{(2k + m + 1)}\mathcal{C}_{2(k+m)}\left(H_{k,-k-m}(z, \, \cdot \,), Q\right)
    \end{align*}
    which proves \eqref{eq:FactorsSmallF} inductively, too.
\end{proof}

We now add the final ingredients: Mono's result \eqref{eq:ResultMono} and Theorem \ref{thm:fkDresult}.

\begin{proof}[Proof of Theorem \ref{thm:NewRepresentations}.]
    Referring to the definition, we first rewrite $\mathbb{P}_{2k+2}(\tau,z) = H_{k+1,k}(\tau,z)$ as well as $\mathscr{P}_{2k}(\tau,z) = H_{k,-k}(-\tau,z)$ and then use our previous machinery with respect to $z$ instead of $\tau$.
    Lemma \ref{lem:FactorsForMainTheorem} proves the statement about $g_{k+1,D}$ up to $m = 2k$ because the base case $m = 0$ follows by virtue of \eqref{eq:ResultMono}. Analogously, we derive the statement for $f_{k,D}$ from Lemma \ref{lem:FactorsForMainTheorem} and Theorem \ref{thm:fkDresult}.
\end{proof}

\section{Decomposition of $g_{k+1,D}$ and its relation to quantum modular forms}

\subsection{Quantum modular forms} Due to the modular properties of $g_{k+1,D}$ and $p_{k+1,D}$, we can now show that the local part $P_{k+1,D}$ of Mono's function gives rise to a quantum modular form.

\begin{proof}[Proof of Proposition \ref{prop:QuantumForm}]
    Parson \cite[Theorem 3.1]{par93} showed that
    \[p_{k+1,D}(\tau + 1) = p_{k+1,D}(\tau) \qquad \text{and} \qquad p_{k+1,D}\left(\frac{-1}{\tau}\right) = \tau^{2k+2}\left(p_{k+1,D}(\tau) - 2q_{k+1,D}(\tau)\right)\]
    where
    \[q_{k+1,D}(\tau) := \sum_{\substack{Q=[a,b,c] \in \mathcal{Q}_D \\ b^2 < D}}\frac{\sgn(Q)}{Q(\tau,1)^{k+1}}\]
    for $\tau \in \H$. The transformation behavior is now a straightforward calculation by \cite[Theorem 1.1 (1) and Proposition 3.1 (1)]{mo21}.

    As mentioned above, the function
    \[P_{k+1,D}^\Q:\Q \to \C, \qquad x \mapsto \lim_{\tau \to x}P_{g_{k+1,D}}(\tau)\]
    is well-defined. In order for $P_{k+1,D}^\Q$ to be a weak quantum modular form, we have to be able to continue the function
    \[h_\gamma(x) := P_{k+1,D}^\Q(x) - (a_3 x + a_4)^{-2k-2}P_{k+1,D}^\Q(\gamma x)\]
    for all $\gamma = \begin{pmatrix}
        a_1 & a_2 \\ a_3 & a_4
    \end{pmatrix} \in \Gamma$ to a real analytic function on $\R \setminus S_\gamma$ where $S_\gamma$ is a finite subset of the real numbers. We may check this only for the generators $T$ and $S$ of $\Gamma$. Obviously, $h_T(x) \equiv 0$. Furthermore,
    \[h_S(x) = q_{k+1,D}(x)\]
    where we continued $q_{k+1,D}$ to the rationals. Since $q_{k+1,D}$ sums only finitely many terms, it is analytic on $\R \setminus M_{S,D}$ where
    \[M_{S,D} := \left\{\frac{-b \pm \sqrt{D}}{2a} \in \R: b^2 < D = b^2 - 4ac\right\}\]
    which, as $D$ is fixed, is a finite set. Thus, $h_S(x)$ is also analytic on $\R \setminus M_{S,D}$ and the claim follows.
\end{proof}

\begin{rmk}
    The set $M_{S,D}$ consists of irrational algebraic integers in $\Q\left(\sqrt{D}\right)$ whose norm is negative, i.e. of integers in $\mathcal{O}_{Q\left(\sqrt{D}\right)}$ that switch sign under the non-trivial automorphism in the Galois group of $\Q\left(\sqrt{D}\right)$ over $\Q$.
\end{rmk}

\subsection{Decomposition of $\mathcal{F}_{1-k,D}$ and $\mathcal{G}_{-k,D}$}
Two major results state that $\mathcal{F}_{1-k,D}$ and $\mathcal{G}_{-k,D}$ are sums of the holomorphic and non-holomorphic Eichler integrals of their respective shadow, i.e. $f_{k,D}$ and $g_{k+1,D}$. To be precise, we have by \cite[Theorem 1.3]{bkk} that
\[\mathcal{F}_{1-k,D}(\tau) = P_C(\tau) + D^{\frac{1}{2}-k}f_{k,D}^*(\tau) - D^{\frac{1}{2}-k}\frac{(2k - 2)!}{(4\pi)^{2k-1}}\mathcal{E}_{f_{k,D}}(\tau)\]
where $P_C$ is a local polynomial of degree at most $2k - 2$, depending only on the connected component $C$ of $\H \setminus E_D$ containing $\tau$ and by \cite[Theorem 1.2]{brimo} that
\begin{align}\label{eq:CurlyGDecomposition}
\mathcal{G}_{-k,D}(\tau) = c_\infty + D^{k+\frac{1}{2}}g^*_{k+1,D}(\tau) - \frac{D^{k+\frac{1}{2}}(2k)!}{(4\pi)^{2k+1}}\mathcal{E}_{g_{k+1,D}}(\tau)
\end{align}
where $c_\infty$ is a specific constant.

In view of the results by L{\"o}brich and Schwagenscheidt \cite{lsmeromorphic}*{Theorem 4.2} and by Mono \cite{mo21}*{Theorem 1.1} as well as of Theorem \ref{thm:fkDresult} it is natural to ask if $\mathcal{G}_{-k,D}$ admits a representation as a trace of cycle integrals as well. Extensively studying the case of $m = 0$ in Theorem \ref{thm:NewRepresentations} remained fruitless so far but other cases might produce new perspectives providing an easy-to-guess preimage of the seed of $H_{k+1,k-m}$ under the shadow and Bol operator.

\subsection{Fourier coefficients of $g_{k+1,D}$ and $\mathcal{G}_{-k,D}$} Due to the decomposition given in \eqref{eq:CurlyGDecomposition}, knowledge of the Fourier coefficients of $\mathcal{G}_{-k,D}$ is equivalent to that of the Fourier coefficients of $g_{k+1,D}$ via known relations, cf. \cite[Theorem 5.5 and Theorem 5.10]{bfkr}. We imitate Zagier's \cite[Appendix 2]{zagier75} and Parson's \cite[Theorem 3.1]{par93} approach via the Laplace inverse.

As the Fourier coefficients originating from $p_{k+1,D}$ were computed by Parson, we are left with those of $P_{g_{k+1,D}}$. To this end, define
\[\widetilde{g_{k + 1,D}}(\tau) := \sum_{Q \in \mathcal{Q}_D} \frac{\sgn(Q)^{k+1}\mathbf{1}_Q(\tau)}{Q(\tau,1)^{k+1}} = \sum_{a \in \Z} \sgn(a) \widetilde{g_{k+1,D}^a}(\tau)\]
where
\[\widetilde{g_{k+1,D}^a}(\tau) := \sum_{\substack{b \in \Z \\ b^2 \equiv D \pmod*{4a}}} \frac{\mathbf{1}_Q(\tau)}{(a\tau^2 + b\tau + \frac{b^2 - D}{4a})^{k+1}}.\]
Since $S_Q = S_{-Q}$ and $a \neq 0$, this simplifies to
\[\widetilde{g_{k + 1,D}}(\tau) = 2 \sum_{a = 1}^\infty \widetilde{g_{k+1,D}^a}(\tau).\]
The functions $\widetilde{g_{k+1,D}^a}$ can be rewritten because the tuple $(a,b)$ already determines the quadratic form. Furthermore, we transform the indicator function into a further summation condition. Thus,
\begin{multline*}
    \widetilde{g_{k+1,D}^a}(\tau) = \sum_{\substack{b \in \Z \\ b^2 \equiv D \pmod*{4a} \\ \left(\frac{2au + b}{2a}\right)^2 + v^2 < \frac{D}{4a^2}}} \frac{1}{(a\tau^2 + b\tau + \frac{b^2 - D}{4a})^{k+1}}\\
    =  \sum_{n \in \Z} \sum_{\substack{b \in \mathcal{R}_a \\ |b + 2a(u + n)| < \Re\left(\sqrt{D - 4a^2v^2}\right)}} \frac{1}{(a(\tau + n)^2 + b(\tau + n) + \frac{b^2 - D}{4a})^{k+1}}
\end{multline*}
where $\mathcal{R}_a$ is a fixed system of representatives of the residue classes of $\nicefrac{\Z}{2a\Z}$ whose square is congruent to $D$ modulo $4a$. However, merging this with Parson's result is not as straightforward as one might hope as this requires the Laplace inverse of the indicator function.

Nevertheless, the Fourier coefficients of $g_{k+1,D}$ share a property with those of $f_{k,D}$ in any connected component of $\H \setminus E_D$.

\begin{lemma}
    Let $k > 2$. Then, the Fourier coefficients of $g_{k+1,D}$ are real in every connected component of $\H \setminus E_D$.
\end{lemma}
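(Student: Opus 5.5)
The plan is to copy the symmetry argument from Lemma \ref{lem:fkDfourier}: show that $\overline{g_{k+1,D}(-\overline{\tau})} = g_{k+1,D}(\tau)$ for $\tau \in \H \setminus E_D$, and then read off that the Fourier coefficients in each connected component are real.

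\textbf{Step 1: the symmetry.} Write $Q = [a,b,c]$ and set $Q' := [a,-b,c]$, which again lies in $\mathcal{Q}_D$. For $\tau = u+iv$ we have $-\overline{\tau} = -u + iv$, so
\[Q_{-\overline{\tau}} = \frac{a|\tau|^2 - bu + c}{v} = Q'_{\tau}.\]
Also $\overline{Q(-\overline{\tau},1)} = a\tau^2 - b\tau + c = Q'(\tau,1)$. The factor $\sgn(Q_\tau)$ is real, so conjugating term by term gives
\[\overline{g_{k+1,D}(-\overline{\tau})} = \sum_{Q \in \mathcal{Q}_D} \frac{\sgn(Q'_\tau)}{Q'(\tau,1)^{k+1}} = g_{k+1,D}(\tau),\]
after reindexing by the bijection $Q \mapsto Q'$ of $\mathcal{Q}_D$. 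Two remarks justify this. First, since $\tau \notin E_D$ exactly when $-\overline{\tau} \notin E_D$, the map $\tau \mapsto -\overline{\tau}$ preserves $\H \setminus E_D$. Second, the series converges absolutely for $k > 2$, as in Mono's work, so the reindexing is allowed.

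\textbf{Step 2: the components.} The reflection $\tau \mapsto -\overline{\tau}$ sends a connected component $C$ of $\H \setminus E_D$ to a component $C'$, which need not equal $C$. This is the step I expect to be the main obstacle: what "Fourier coefficients in a component" means.

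\begin{itemize}
\item If it means the expansion of the holomorphic function $g_{k+1,D}|_C$, the natural route is to use translation invariance. The set $E_D$ is invariant under $\tau \mapsto \tau+1$, and $g_{k+1,D}$ is $1$-periodic. On a horizontal strip $\{v > v_0\}$ lying above all geodesics of height below a bound the expansion is a genuine $q$-expansion, and the map $q \mapsto \overline{q}$ shows its coefficients are real.
\item More generally, I would use Mono's decomposition $g_{k+1,D} = p_{k+1,D} - 2P_{g_{k+1,D}}$. Parson's function $p_{k+1,D}$ has real coefficients by the same reflection argument applied to its defining series. On each component, $P_{g_{k+1,D}}$ is a finite-type sum of rational functions with integer-coefficient denominators; the reflection maps the term for $Q$ to the term for $Q'$, and $\mathbf{1}_{Q}(-\overline{\tau}) = \mathbf{1}_{Q'}(\tau)$.
\end{itemize}

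So the expansion of $g_{k+1,D}$ on $C'$ is the coefficient-wise complex conjugate of its expansion on $C$. The argument I would commit to writing down is the following: the local expansion of $g_{k+1,D}$ about the imaginary axis is real because of this symmetry, and in a component meeting the line $u=0$ (or $u = \tfrac12$, by periodicity), the reflection maps the component to itself. For a general component I would argue via the decomposition above, which is real coefficient-wise termwise.
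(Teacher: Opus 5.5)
Your Step 1 is exactly the paper's proof: the paper records only $g_{k+1,D}(\tau)=\overline{g_{k+1,D}(-\overline{\tau})}$, via $[a,b,c]_{-\overline{\tau}}=[a,-b,c]_\tau$ as in Lemma \ref{lem:fkDfourier}, and stops there. Your Step 2 is extra care the paper does not take. You correctly observe that $\tau\mapsto-\overline{\tau}$ only permutes the components, so the symmetry alone constrains reflection-stable regions such as the top component, where $g_{k+1,D}=p_{k+1,D}$. Reading the statement through Mono's decomposition, as Parson's real $q$-series plus a local rational function with integer coefficients, is a sensible way to make ``in every connected component'' precise.
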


\begin{proof}
    In a similar manner to the proof of Lemma \ref{lem:fkDfourier}, we have
    \[g_{k+1,D}(\tau) = \overline{g_{k+1,D}(-\overline{\tau})}\]
    after noting that
    \[[a,b,c]_{-\overline{\tau}} = [a,-b,c]_\tau\]
    for any $[a,b,c] \in \mathcal{Q}_D$.
\end{proof}

More interestingly, as pointed out by Mono in the proof of \cite[Theorem 1.1]{mo21}, the local part of the Fourier coefficients of $g_{k+1,D}$ vanishes above the net of Heegner geodesics and hence the local Fourier expansion of $g_{k+1,D}$ coincides with the one of Parson's function for $v$ sufficiently large. This allows us to also compute the Fourier coefficients of $\mathcal{G}_{-k,D}$ in that region, i.e.
\[\mathcal{G}_{-k,D}(\tau) = c_\infty + \frac{D^{k+\frac{1}{2}}}{(4\pi)^{2k+1}}\sum_{r\geq 1}\frac{c(r)}{r^{2k+1}}\Gamma(2k+1,4\pi rv)q^{-r} - \frac{D^{k+\frac{1}{2}}(2k)!}{(4\pi)^{2k+1}}\sum_{r\geq 1}\frac{c(r)}{r^{2k+1}}q^r\]
where
\begin{align*}
    c(r) &\coloneqq \frac{2^{k + \frac{3}{2}}\pi^{k+2}r^{k + \frac{1}{2}}}{D^{\frac{k}{2} + \frac{1}{4}} k!}\sum_{a = 1}^\infty \frac{S_a(D;r)}{\sqrt{a}}J_{k+\frac{1}{2}}\left(\frac{\pi r \sqrt{D}}{a}\right),\\
    c_\infty &\coloneqq \frac{\pi D^{k + \frac{1}{2}}}{2^{2k}(2k + 1)}\sum_{a \geq 1}\frac{S_a(D;0)}{a^{k+1}}
\end{align*}
are the Fourier coefficients of Parson's function \cite[Theorem 1.3]{par93} and the value of $\mathcal{G}_{-k,D}$ at infinity \cite[eq. (5.3)]{brimo}, respectively. Here,
\[S_a(D;r) \coloneqq \sum_{\substack{b \pmod*{2a} \\ b^2 \equiv D \pmod*{4a}}} \exp\left(2\pi i\frac{br}{2a}\right), \qquad a \in \N\]
are so-called \emph{Weyl} sums and $J_{k+\frac{1}{2}}$ is the usual Bessel function of the first kind.

\begin{rmk}
    One might hope that we are able to imitate Mono's proof which takes advantage of the fact that the $c(n)$ in the Fourier expansion of $g_{k+1,D}$ may be understood as cycle integrals of various Niebur-Poincar\'{e} series
    \[G_n(z,s) \coloneqq \sum_{\gamma \in \Gamma_\infty \backslash \Gamma}g_n(z,s)\big\vert_0\gamma\]
    where $\Re(s) > 1$ and
    \[g_n(z,s) \coloneqq \frac{\Gamma(s)}{\Gamma(2s)}M_{0,s - \frac{1}{2}}(4\pi|n|y)\exp(2\pi inx)\]
    with $M_{\mu,\nu}$ being the usual Whittaker function \cite[Definition 6.10]{bfkr} as was proven by Duke, Imamo\u{g}lu, and T\'{o}th \cite{dit11}. However, applying multiple powers of the {Maa\ss} lowering operator to our Fourier coefficients of $\mathcal{G}_{-k,D}$ does not yield an expression for which an identity is known that is as nice as the one that Mono uses. We advise the more eager reader to study \cite{mo21}.
\end{rmk}

\end{document}